\author[A Branquinho]{Am\'\i lcar Branquinho\(^1\)}
\address{\(^1 \)CMUC, Departamento de Matem\'atica,
Universidade de Coimbra, 3001-454 Coimbra, Portugal}
\email{\(^1\)ajplb@mat.uc.pt}
\author[A Foulqui\'e]{Ana Foulqui\'e-Moreno\(^2\)}
\address{\(^2 \)CIDMA, Departamento de Matem\'atica, Universidade de Aveiro, 3810-193 Aveiro, Portugal}
\email{\(^2 \)foulquie@ua.pt}
\author[K Rampazzi]{Karina Rampazzi\(^3 \)}
\address{\(^3 \)Departamento de Matem\'atica, IBILCE, UNESP-Universidade Estadual Paulista, 15054-000, S\~ao Jos\'e do Rio Preto, SP, Brasil}
\email{\(^3 \)karina.rampazzi@unesp.br}
\newtheorem{corollary}{Corollary}[section]
\newtheorem{lemma}{Lemma}[section]
\newtheorem{proposition}{Proposition}[section]
\newcommand{\T}{\ensuremath{\mathbb{T}}}
\newcommand{\dd}{\operatorname{d} \hspace{-.0425cm}}
\newcommand{\dde}{\operatorname e}
\newcommand{\ddi}{\operatorname i}
\title[Riemann--Hilbert problem for orthogonal polynomials in unit circle]{Generalized semiclassical orthogonal polynomials on the unit~circle:
A Riemann--Hilbert perspective}
\keywords{Orthogonal polynomials on the unit circle, Riemann--Hilbert Problems, modified Jacobi weight, modified Bessel weight, recurrence relations}
\subjclass[2020]{42C05, 33C45, 33C47, 47A56, 47A75}
\begin{document}

\begin{abstract}
In this work we show how to get advantage from the Riemann--Hilbert analysis in order to obtain first and second order differential equations
for the orthogonal polynomials and 
associated functions
with a weight on the unit circle.
We~deduce properties for the recurrence relation coefficients from differential properties of the weight. We take the so called generalized modified Jacobi and Bessel 
weights as a case~study.
\end{abstract}

\maketitle


\section{Introduction}\label{sec1}

The purpose of this work is to explore the connection between the theories of the Rie\-mann--Hilbert problem and of the orthogonal polynomials on the unit circle defined in terms of H\"older type weights. Furthermore, we show how Riemann--Hilbert analysis could help in deriving analytical properties for sequences of orthogonal polynomials associated with specific weight functions.

A weight function, \( w \), on the unit circle,
\(\mathbb{T}=\{z\in \mathbb{C} : |z|=1\}
 \), is called semiclassical if it satisfies the equation
\begin{align}\label{Eq-Tipo-Pearson-1-int}
\frac{\dd}{\dd \theta} \left( A (\dde ^{\ddi \theta}) w ( \theta ) \right) = B(\dde ^{\ddi \theta}) w( \theta ) 
, && \theta\in [0,2\pi],
\end{align}
where \(A \) and \(B \) are Laurent polynomials and \( A(\dde ^{\ddi \theta})=0 \) at the singular points of \( 1/w \). This definition is due to Magnus \cite{Ma00} in the case of the unit circle.
For the case of the real line, we refer to~\cite{Magnus_Painleve} and the references therein.
The equation \eqref{Eq-Tipo-Pearson-1-int} will be called a Pearson-type differential equation. In \cite{BranquinhoR2009} the Laguerre--Hahn families on the unit circle are treated, which include the semiclassical weight functions as particular cases.

If we allow in equation~\eqref{Eq-Tipo-Pearson-1-int} that the \( B \) can be taken as an entire function, we call \( w \) a \emph{generalized semiclassical weight}.

For orthogonality on the unit circle, the classification problem of semiclassical orthogonal polynomials was studied and divided into classes in \cite{CaSu97}, considering linear functionals that satisfy a Pearson-type equation.
This approach can be adapted to the equation~\eqref{Eq-Tipo-Pearson-1-int} considering that, if \(\deg A = p \) and \(\max\{p-1, \deg((p-1)A(z) +\ddi B(z))\}=q \), then \(w \) belongs to the class \( (p,q) \) of semiclassical weight functions.

A well known example of a semiclassical weight function on the unit circle is
\begin{align*}
w(\theta) =
 \dde ^{\ell \cos(\theta)} , && 0\leqslant \theta \leqslant 2\pi , &&
\text{with} && \ell>0 ,
\end{align*}
 see \cite{Is05}. 
The associated sequence of orthogonal monic polynomials, \( \{\Phi_n\}_{n\geqslant 0} \), on the unit circle satisfies the structure relation
\begin{align}\label{structurebessel}
\Phi_{n}^{\prime}(z) = n \Phi_{n-1}(z)
+ \frac{\ell \kappa_{n-2}^2}{2 \ \kappa_{n}^2} \Phi_{n-2}(z), && n \geqslant 2.
\end{align}
From this structure relation, it is possible to show the nonlinear difference equation
\begin{align} \label{Painleve}
\alpha_{n}(\ell) + \alpha_{n-2}(\ell) = -\frac{2n}{\ell} \frac{\alpha_{n-1}(\ell)}{1-\alpha_{n -1}^2(\ell)}, && n \geqslant 2.
\end{align}
where \( \alpha_{n-1}=-\overline{\Phi_n(0)} \) are known as Verblunsky coefficients.
This equation was presented by Periwal and Shevitz \cite{PS90}. It corresponds to a discrete Painlev\'e \({\rm dP_{II}} \) equation, see \cite{Va08}. Studies on semiclassical weight functions on the unit circle can also be found in \cite{BraRe12, Ma00} as well as in~\cite{Forrester_Witte,Ismail_Witte}.
In the last two papers the authors derive raising and lowering operators and find second order differential equations
for the orthogonal polynomials on the unit circle associated with 
semiclassical weights.
In~\cite{Forrester_Witte} it is stated a Riemann--Hilbert problem which is different to the one we present here, 
but the authors did not take advantage of it to derive differential properties for the polynomials and 
functions of second kind 
related with semiclassical weights.

The Riemann--Hilbert method is of great importance in the theory of boundary value problems for holomorphic functions. The main objective is to find a function that is holomorphic in a certain region, taking into account certain jump relations between the values of its limits over the points of a given contour. This approach to orthogonal polynomials was formulated by Fokas, Its, and Kitaev in \cite{FIK92}, and a method for calculating asymptotics with Riemann--Hilbert problems was presented by Deift and Zhou in \cite{De93}, first in the context of integrable systems. The most comprehensive application to orthogonal polynomials occurred in 1999, thanks to the work of Deift, Kriecherbauer, McLaughlin, Venakides, and Zhou, see \cite{De99, DKMVZ1, DKMVZ2}. These discoveries were strongly influenced by the relationship between orthogonal polynomials and the theory of random matrices \cite{CKV08, DIZ97}.

This technique has been widely used in the theory of orthogonal polynomials to obtain asymptotic and differential properties. In this work, we derive first and second order differential equations for generalized semiclassical orthogonal polynomials on the unit circle, using the Riemann--Hilbert techniques. We also show that some of the first-order differential equations are equivalent to structure relations found in \cite{BRS23}.
More details about the Riemann--Hilbert Problem can be found in \cite{ AF97, BI03, DIK11}.
Here we follow the Riemann--Hilbert Problem stated in~\cite{DIK11} (see also~\cite{Mf06}) where there appears two systems of associated functions that, so far as we know, have not been interpreted in terms of theirs differential properties.
For more recent results on Riemann--Hilbert theory related with matrix orthogonality, see \cite{BFFM22, BFFM, CM19, CCMa19}.

This work is structured as follows. 
In Section~\ref{sec2},
we start with the definition of orthogonality on the unit circle and present the Szeg\H{o} recurrence satisfied by these polynomials.
Next we state the Riemann--Hilbert problem and a technical lemma necessary for the development of the work. 
In Section~\ref{sec3} we study the analytic properties of the structure matrices associated with the generalized semiclassical weights.
We construct first and second order differential operators for the generalized semiclassical orthogonal polynomials in the unit circle and
associated function as well.

Furthermore, we determine, for the generalized semiclassical weights, the zero curvature formulas and find difference equations satisfied by Verblunsky coefficients.
We end this work, with Section \ref{sec4}, where by using the Riemann--Hilbert approach, we determine the structure matrices associated with the generalized modified, Bessel and Jacobi, weight functions,
and construct the first and second order differential operators for orthogonal polynomials and
associated functions
with these two considered weight functions on the unit circle.

\section{Riemann--Hilbert problem}\label{sec2}

Let \( \mu \) be a positive measure on the unit circle \( \mathbb{T} \),
 absolutely continuous, with respect to the Lebesgue measure, 
 i.e.
\begin{align*}
\dd \mu (z) = \frac{\nu (z)}{\ddi z} \, \dd z ,
&& \text{with} &&
w(\theta) = \nu (\dde^{\ddi \theta}) , && \theta\in[0,2\pi]. 
\end{align*}
The corresponding sequence of monic orthogonal polynomials,
\( \{\Phi_n\}_{n\geqslant 0} \),
satisfies 
\begin{align*}
 \int_{\mathbb{T}} \Phi_n (z)\overline{\Phi_m(z)} \dd \mu (z) 
= \kappa_n^{-2} \delta_{n,m}, && n,m \in \mathbb N \cup \{ 0 \},
\end{align*}
with \( \kappa_n>0 \), or equivalently,
 \begin{align} \label{eq:ortogonalidade}
\int_{\mathbb{T}} \Phi_n (z)\overline{\Phi_m(z)} \frac{\nu (z)}{\ddi z }\dd z
 =
\kappa_n^{-2} \delta_{n,m} , && n,m \in \mathbb N \cup \{ 0 \} .
\end{align}
 Studies about orthogonal polynomials on the unit circle can be found in~\cite{Is05, Simon-Book-p1}.
These polynomials satisfy the following relation, known as Szeg\H{o} recurrence, see \cite{Simon-Book-p1},
\begin{align}\label{Szegorecorrence}
\Phi_{n}(z) = z\Phi_{n-1}(z) - \overline{\alpha}_{n-1} \Phi_{n-1}^{*}(z), && n \in \mathbb N,
\end{align}
where \( \Phi_{0}(z) =1 \), \( \Phi_{n}^*(z) =z^n \overline{\Phi_{n}(1/\overline{z})} \) is a reciprocal polynomial and \( \alpha_{n-1} = - \overline{\Phi_{n}(0)} \), are the Verblunsky coefficients. Note that if the Verblunsky coefficients are known, then the relation \eqref{Szegorecorrence} generates the sequence of polynomials \( \{ \Phi_n \}_{n\geqslant 0} \). 

In terms of the powers of \( z \), and the sequence of polynomials \( \{ \Phi_n \}_{n \geqslant 0} \) and \( \{ \Phi_n^* \}_{n \geqslant 0} \), the orthogonality relations~\eqref{eq:ortogonalidade}
 read as
\begin{align*}
\int_{\mathbb{T}}z^{-k}\Phi_n(z)\dd \mu(z) 
&=0, && k=0,1,\ldots, n-1
&& \text{and} &&
\int_{\mathbb{T}}z^{-n}\Phi_n(z)\dd \mu(z) = \kappa^{-2}_n ,
 \\
\int_{\mathbb{T}}z^{-k}\Phi_n^*(z)\dd \mu(z)
& =0, && k=1,2,\ldots, n
&& \text{and} &&
\int_{\mathbb{T}} \Phi_n^* (z)\dd \mu(z) = \kappa^{-2}_n ,
\end{align*}
or, equivalently, as
\begin{align}\label{ortogcu}
\int_{\mathbb{T}}
\Phi_n(z) \frac{\nu(z)}{z^j} \dd z & =0, 
&& 
\int_{\T}
\Phi_{n}^*(z) \frac{\nu(z)}{z^{j+1}} \dd z
 = 0 , && j = 1 ,\ldots , n ,
 \\
\label{ortogcuultima}
\int_{\mathbb{T}}
\Phi_n(z) \frac{\nu(z)}{\ddi z^{n+1}} \dd z &= \kappa_n^{-2}, 
&& 
\int_{\T}
\Phi_{n}^*(z) \frac{\nu(z)}{\ddi z} \dd z
 = \kappa_n^{-2} , && n \in \mathbb N 
 .
\end{align}
From Szeg\H{o} recurrence, we have the following equations
\begin{align}
\label{recszego}
\Phi_{n+1}^{*}(z) &= \Phi_{n}^{*}(z) - \alpha_{n} z \Phi_{n}(z), && n\geqslant 0, \\
\nonumber\Phi_{n+1}^{*}(z) &= (1-|\alpha_{n} |^2) \Phi_{n}^{*}(z)-\alpha_{n} \Phi_{n+1}(z), && n\geqslant 0,
\end{align}
and \( \kappa_{n}^{2}/\kappa_{n+1}^{2} = 1-|\alpha_n|^2 \), see \cite{Simon-Book-p1}.
We denote the monic orthogonal polynomials on the unit circle as
\begin{align}\label{newphin}
 \Phi_n(z)=z^n+\Phi_1^{n}z^{n-1}+\Phi_2^{n}z^{n-2}+\cdots+\Phi_{n-1}^{n}z+\Phi_n^n, && n\geqslant 0.
\end{align}
Applying the Szeg\H{o} recurrence~\eqref{Szegorecorrence}, it is easy to see that the coefficients \( \Phi_{1}^n \) in \eqref{newphin} are given in terms of the Verblunsky coefficients as 
\begin{align*}
\Phi_{1}^n = \sum_{j=0}^{n-1} \overline{\alpha}_{j} \alpha_{j-1}, 
&& n \geqslant 1 && \text{with} && \alpha_{-1} = -1 .
\end{align*}
Let \( \nu \) be a H\"older type weight on the unit circle~\(\mathbb T \), oriented in the positive direction.
Then,~it can be seen (cf.~\cite{DIK11,Mf06}),
that the unique solution of the Riemann--Hilbert problem, which consists in the determination of a \(2 \times 2 \) matrix complex function such~that:
\begin{enumerate}[\rm \bf (RH1)]

\item
\( Y_n \) is holomorphic in \( \mathbb{C}\setminus\mathbb{T} \);

\item
 \( Y_n \) satisfies the jump condition for all \( t\in \mathbb{T} \),
\begin{align*}
 {(Y_n})_{+}(t)={(Y_n)}_{-}(t)
\begin{bmatrix}
1 & \nu(t)/t^n \\
0 & 1
\end{bmatrix},
\end{align*}
where
\( Y_n \) has continuous boundary values \(Y_+(z) \) as \(z \)
approaches the unit circle from the inside, and \(Y_-(z) \) from the outside;

\item
\( Y_n \) has the following asymptotic behavior at infinity
\begin{align*}
Y_n(z)
= 
\Big( \mathbf{\operatorname I}_2 + \operatorname{O} ( z^{-1} ) \Big)
\begin{bmatrix}
z^{n} & 0\\
0 & z^{-n}
\end{bmatrix}
&&
\text{as} && z\to\infty
\end{align*}
\end{enumerate}
(the \( \mathbf{\operatorname I}_2 \) is the order \( 2 \) identity matrix) is given by
\begin{align*}
Y_n(z)
=
\begin{bmatrix}
\Phi_n(z) & 
 \displaystyle
 \frac{1}{2\pi}\int_\mathbb{T}\frac{\Phi_n(t)}{t-z}\frac{\nu(t)}{\ddi t^n}\dd t
 \\[.375cm]
- 2 \pi\kappa_{n-1}^2 \Phi^*_{n-1}(z) & 
\displaystyle
- \kappa_{n-1}^2\int_\mathbb{T} \frac{\Phi^*_{n-1}(t)}{t-z}
\frac{\nu(t)}{\ddi t^n}\dd t
\end{bmatrix},
&& n \in \mathbb N .
\end{align*}
Moreover, it is also proved that \(\det Y_n = 1 \), \( n \in \mathbb N \).

We will write, \(Y_n \), 
in the form
\begin{align}\label{ynnew}
Y_n(z)=\begin{bmatrix}
\Phi_n(z) & G_n(z) \\[.075cm]
-b_{n-1}\Phi^*_{n-1}(z) & -b_{n-1} G^*_{n-1}(z)
\end{bmatrix} ,
\end{align}
where
\begin{align}
\label{eq:bbconst}
b_{n-1}= 2\pi\kappa_{n-1}^2 ,
&& n \in \mathbb N ,
\end{align} 
and
\begin{align}
\label{G_n}
 G_n(z) & = \frac{1}{2\pi \ddi }\int_\mathbb{T}
\frac{\Phi_n(t)}{t-z}\frac{\nu(t)}{t^n}
\dd t, 
&&
 G^*_{n-1}(z)
 =\frac{1}{2\pi \ddi }\int_\mathbb{T} \frac{\Phi^*_{n-1}(t)}{t-z}
\frac{\nu(t)}{t^n}\dd t ,
\end{align}
 will be called 
associated functions with the orthogonal polynomials, \( \Phi_n \) and weight function \( \nu \) on the unit circle.
Again, since \( \det Y_n=1 \),
we can write
\begin{align}\label{inversaynnew}
Y_n^{-1}(z)=\begin{bmatrix}
-b_{n-1} G^*_{n-1}(z) & -G_n(z) \\[.075cm]
b_{n-1}\Phi^*_{n-1}(z) & \Phi_n(z)
\end{bmatrix}.
\end{align}
Now, we will work on the 
associated functions
\( G_n \) and \( G_{n-1}^* \).

\begin{proposition}
Let \( 
\dd \mu (z) = 
\frac{\nu (z)}{\ddi z} \dd z
 \) be a measure defined on the unit circle. Then their sequences of
associated functions
defined by~\eqref{G_n} 
satisfies the following recurrence relations,
\begin{align}
G_{n}(z) &= G_{n-1}(z) - \overline{\alpha}_{n-1} G^*_{n-1}(z), && n \in \mathbb N,
\label{eq:G_n} \\
z G_{n}^* (z) &= G_{n-1}^* (z) - \alpha_{n-1} G_{n-1}(z) , && n \in \mathbb N . 
\label{eq:G_n^*}
\end{align}
\end{proposition}

\begin{proof} To show the equation \eqref{eq:G_n} we multiply \eqref{Szegorecorrence}
in \( t \) variable by \( \frac{\nu(t)}{ 2 \pi \ddi t^{n+1}(t-z)} \) and then take integral in~\(\mathbb T \) with respect to the variable \( t \), i.e.
\begin{align*}
\frac{1}{2\pi \ddi } \int_\mathbb{T}\frac{\Phi_{n+1}(t)\nu(t)}{t^{n+1}(t-z)}{\dd t}= \frac{1}{2\pi \ddi } \int_\mathbb{T}\frac{t \Phi_{n}(t)\nu(t)}{t^{n+1}(t-z)} \dd t - \overline{\alpha}_{n} \frac{1}{2\pi \ddi } \int_\mathbb{T}\frac{\Phi^*_{n}(t)\nu(t)}{t^{n+1}(t-z)}{\dd t}
\end{align*}
which is equivalent to~\eqref{eq:G_n}.
To show \eqref{eq:G_n^*}, we proceed in the same way as before, now using the relation \eqref{recszego} in \( t \) variable and multiplying by \( \frac{\nu(t)}{ 2 \pi \ddi t^{n}(t-z)} \), i.e.
\begin{align*}
\frac{1}{2\pi \ddi } \int_\mathbb{T}\frac{\Phi^*_{n}(t)\nu(t)}{ t^{n}(t-z)}{\dd t}= \frac{1}{2\pi \ddi } \int_\mathbb{T}\frac{t \Phi_{n-1}(t)\nu(t)}{t^{n}(t-z)}{\dd t}- \alpha_{n-1} \frac{1}{2\pi \ddi } \int_\mathbb{T}\frac{t \Phi_{n-1}(t)\nu(t)}{ t^{n}(t-z)}{\dd t}.
\end{align*}
Now, taking into account that
\begin{align*}
& \frac{1}{2\pi \ddi } \int_\mathbb{T}\frac{\Phi^*_{n}(t)\nu(t)}{t^{n}(t-z)} \dd t
 = 
\frac{1}{2\pi \ddi } \int_\mathbb{T}\frac{t-z+z}{t-z}\frac{\Phi^*_{n}(t)\nu(t)}{ t^{n+1}}\dd t \\
 & \hspace{1.5cm} = \frac{1}{2\pi \ddi } \int_\mathbb{T}\frac{\Phi^*_{n}(t)\nu(t)}{t^{n+1}}{\dd t}+ \frac{z}{2\pi \ddi } \int_\mathbb{T}\frac{\Phi^*_{n}(t)\nu(t)}{t^{n+1}(t-z)}{\dd t}
 && \text{and by~\eqref{ortogcu}
 }
 \\
 & \hspace{1.5cm} =
 \frac{z}{2\pi \ddi } \int_\mathbb{T}\frac{\Phi^*_{n}(t)\nu(t)}{ t^{n+1}(t-z)}\dd t.
\end{align*}
So, we obtain~\eqref{eq:G_n^*}.
\end{proof}

Using the orthogonality properties, in the next result we present the asymptotic expansion of \( G_n \) and \( G_{n-1}^* \) about \( \infty \).

\begin{lemma}[Technical]
\label{lemma6}
Let \( \dd \mu (z) = 
\frac{\nu (z)}{\ddi z} \dd z \) be a measure defined on the unit circle.
Then, 
\begin{align}\label{G_n_0}
G_n(0)=\frac{1}{b_n} ,
 && 
G_{n-1}^*(0)=\frac{\alpha_{n-1}}{b_{n-1}},
\end{align}
and
\begin{align}
 \label{Gninf}
G_n(z)
 & =
-\frac{\overline{\alpha}_n}{b_n}\frac{1}{z^{n+1}}+\left(\frac{\overline{\alpha}_n}{b_n}\Phi_{1}^{n+1}-\frac{\overline{\alpha}_{n+1}}{b_{n+1}}\right)\frac{1}{z^{n+2}}+\cdots , && n \geqslant 0 ,
 \\
\label{Gnestrelainf}
G_{n-1}^*(z) 
 & =
\frac 1{b_{n-1}} \left(-\frac{1}{z^n}+\frac{\Phi_{1}^{n}}{z^{n+1}}-\frac{\Phi_{1}^{n}\Phi_{1}^{n+1}-\Phi_{2}^{n+1}}{z^{n+2}} + \cdots \right) , && n \geqslant 1 .
\end{align}
\end{lemma}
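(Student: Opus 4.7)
The plan is to prove all four identities starting from the Cauchy integrals~\eqref{G_n}, using only the orthogonality~\eqref{ortogcu}, \eqref{ortogcuultima} and the Szeg\H{o} recurrences~\eqref{Szegorecorrence}, \eqref{recszego}. For~\eqref{G_n_0} I would just set \( z=0 \): the resulting \( G_n(0)=\frac{1}{2\pi\ddi}\int_\T \Phi_n(t)\nu(t)t^{-n-1}\dd t \) is exactly the integral evaluated in the first identity of~\eqref{ortogcuultima}, giving \( 1/b_n \). For \( G_{n-1}^*(0) \) I would first invoke the index-shift of~\eqref{recszego} to write \( \Phi_{n-1}^*(t)=\Phi_n^*(t)+\alpha_{n-1}t\Phi_{n-1}(t) \); after substitution, the integral with \( \Phi_n^* \) is annihilated by the second part of~\eqref{ortogcu} (with \( j=n \)), while the one with \( \Phi_{n-1} \) is evaluated by~\eqref{ortogcuultima}, producing \( \alpha_{n-1}/b_{n-1} \).

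For the asymptotic expansions~\eqref{Gninf}, \eqref{Gnestrelainf} I would use the geometric expansion
\begin{equation*}
\frac{1}{t-z} = -\sum_{k\geqslant 0}\frac{t^k}{z^{k+1}}, \qquad |z|>1,
\end{equation*}
to obtain \( G_n(z)=-\sum_{k\geqslant 0}I_k^{(n)}/z^{k+1} \) with \( I_k^{(n)}=\frac{1}{2\pi\ddi}\int_\T \Phi_n(t)\nu(t)\,t^{k-n}\dd t \), and an analogous series for \( G_{n-1}^* \). Orthogonality~\eqref{ortogcu} immediately forces \( I_0^{(n)}=\cdots=I_{n-1}^{(n)}=0 \) and the analogous vanishing for the first \( n-1 \) coefficients of the \( G_{n-1}^* \)-series, while~\eqref{ortogcuultima} supplies the leading non-vanishing coefficient of \( G_{n-1}^* \). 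For the leading coefficient of \( G_n \) I would use \( t\Phi_n = \Phi_{n+1}+\bar\alpha_n\Phi_n^* \) from~\eqref{Szegorecorrence} together with \( \nu(t)\dd t=\ddi t\,\dd\mu(t) \): the \( \Phi_{n+1} \)-part integrates to zero against \( \dd\mu \) and the \( \Phi_n^* \)-part contributes \( \bar\alpha_n\kappa_n^{-2}/(2\pi) \), so \( I_n^{(n)}=\bar\alpha_n/b_n \), matching the first term of~\eqref{Gninf}.

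The hard part will be the second non-vanishing coefficients, since the same Szeg\H{o} reduction leaves the auxiliary integral \( \int_\T t\Phi_n^*(t)\,\dd\mu(t) \), which is not directly covered by the orthogonality relations. My plan for it is to expand \( t\Phi_n^*(t) \) in the orthogonal basis \( \{\Phi_k\}_{k=0}^{n+1} \) and to compute the \( \Phi_{n+1} \)-coefficient in two ways: by leading-term matching it must equal \( -\alpha_{n-1} \), while its inner-product expression reduces, by the second part of~\eqref{ortogcu} and the first part of~\eqref{ortogcuultima} applied to \( \Phi_n^* \), to an explicit linear combination of \( \kappa_n^{-2} \) and the unknown integral. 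Combining the \( z^1 \)-coefficient \( \Phi_n^{n+1}=-\bar\alpha_{n-1}-\bar\alpha_n\overline{\Phi_1^n} \) of \( \Phi_{n+1} \) (read off from~\eqref{Szegorecorrence}) with the ratio \( \kappa_{n+1}^2/\kappa_n^2=1/(1-|\alpha_n|^2) \) and the identity \( \Phi_1^{n+1}=\Phi_1^n+\bar\alpha_n\alpha_{n-1} \) that follows from the sum displayed right after~\eqref{newphin} should collapse the two expressions into the clean formula \( \int_\T t\Phi_n^*\,\dd\mu=-\Phi_1^{n+1}/\kappa_n^2 \). Substituting this back produces the second coefficient \( \bar\alpha_n\Phi_1^{n+1}/b_n-\bar\alpha_{n+1}/b_{n+1} \) of~\eqref{Gninf}, and the entirely analogous computation applied to \( G_{n-1}^* \) gives the coefficient \( -(\Phi_1^n\Phi_1^{n+1}-\Phi_2^{n+1})/b_{n-1} \) of~\eqref{Gnestrelainf}; the \( +\cdots \) terms arise by iterating the same scheme.
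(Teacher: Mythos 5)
Your argument follows the paper's proof in all its essentials: \eqref{G_n_0} via evaluation at \(z=0\) and the shift \(\Phi_{n-1}^*=\Phi_n^*+\alpha_{n-1}t\,\Phi_{n-1}\) for the second identity; the geometric expansion of \(1/(t-z)\) plus \eqref{ortogcu}--\eqref{ortogcuultima} for the vanishing and leading coefficients; and the Szeg\H{o} reduction \(t\Phi_n=\Phi_{n+1}+\overline{\alpha}_n\Phi_n^*\) for the first coefficient of \eqref{Gninf}. The one place you genuinely depart from the paper is the auxiliary integral \(\int_\T t\Phi_n^*\,\dd\mu\) and its higher analogues. The paper identifies \(\int_\T t^j\Phi_{n-1}^*\,\dd\mu=\int_\T t^{n-1+j}\overline{\Phi_{n-1}(t)}\,\dd\mu\) and expands \(\Phi_{n+j-1}\) in monomials inside \(\int_\T\Phi_{n+j-1}\overline{\Phi_{n-1}}\,\dd\mu=0\); this is a triangular recursion with unit diagonal, so each new moment is obtained from the previous ones by pure substitution. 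Your route (expand \(t\Phi_n^*\) in the basis \(\{\Phi_k\}\) and match the \(\Phi_{n+1}\)-coefficient) does lead to the correct value \(-\Phi_1^{n+1}/\kappa_n^2\): the algebra with \(\Phi_n^{n+1}=-\overline{\alpha}_{n-1}-\overline{\alpha}_n\overline{\Phi_1^n}\), \(\kappa_{n+1}^{-2}=(1-|\alpha_n|^2)\kappa_n^{-2}\) and \(\Phi_1^{n+1}=\Phi_1^n+\overline{\alpha}_n\alpha_{n-1}\) collapses exactly as you predict. But it is both longer and strictly weaker.

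The weakness is a genuine, if localized, gap. The inner-product expression you equate to \(-\alpha_{n-1}\) works out to \(\kappa_{n+1}^2\big(\overline{\Phi_n^{n+1}}\,\kappa_n^{-2}+\overline{\Phi_{n+1}(0)}\int_\T t\Phi_n^*\,\dd\mu\big)\), so extracting the unknown integral forces a division by \(\overline{\Phi_{n+1}(0)}=-\alpha_n\). When \(\alpha_n=0\) --- which happens for perfectly good H\"older weights (for \(\nu\equiv1\) every Verblunsky coefficient vanishes), and the lemma is stated for a general weight --- your equation degenerates into the tautology \(-\alpha_{n-1}\kappa_{n+1}^{-2}=\overline{\Phi_n^{n+1}}\,\kappa_n^{-2}\) and determines nothing; the same division by a Verblunsky coefficient recurs at every subsequent order of the expansion. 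To make the proof unconditional, replace this step by the paper's monomial-expansion argument, which never divides by anything.
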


\begin{proof}
Taking \( z=0 \) in \eqref{G_n}, we have
\begin{align*}
G_{n}(0) &= \frac{1}{2\pi \ddi }\int_\mathbb{T}
\frac{\Phi_n(t)}{t^{n+1}}\nu(t)
\dd t 
 .
\end{align*}
Using~\eqref{ortogcuultima} and~\eqref{eq:bbconst}
we get the first identity in \eqref{G_n_0}.
Now, 
taking \( z=0 \) in
the second identity in~\eqref{G_n_0}
and using \eqref{recszego} together with the orthogonality properties, we get
\begin{align*}
G^*_{n-1}(0)&=\frac{1}{2\pi \ddi }\int_\mathbb{T} \frac{ \Phi_n^*(t)+\alpha_{n-1}t\Phi_{n-1}(t) }{t^{n+1}} \nu(t) \dd t .
\end{align*}
Therefore, using~\eqref{ortogcuultima},~\eqref{ortogcu}, and~\eqref{eq:bbconst}
we get the second identity in~\eqref{G_n_0}.
To prove equation~\eqref{Gnestrelainf}, we take into account the second identity in~\eqref{G_n}
in order to get
\begin{align*}
G^*_{n-1}(z) & = \frac{1}{2\pi \ddi }\int_\mathbb{T}\frac{\Phi^*_{n-1}(t)\nu(t)}{t^n(t-z)}\dd t \\
 & = -\sum_{j=0}^{\infty}\frac{1}{z^{j+1}}\frac{1}{2\pi}\int_\mathbb{T}t^j\Phi_{n-1}^*(t)\frac{\nu(t)}{\ddi t^n}\dd t 
 && \text{and by~\eqref{ortogcu}} \\
 & = -\sum_{j=0}^{\infty}\frac{1}{z^{j+n}}\frac{1}{2\pi}\int_\mathbb{T}t^j\Phi_{n-1}^*(t)\frac{\nu(t)}{\ddi t}\dd t .
 \end{align*}
Now, by \eqref{ortogcuultima} we get that the coefficient of the \( (-n) \)-th \( z \) power is \( -b_{n-1}^{-1} \).
To determine the coefficient of the \( (-n-1) \)-th \( z \) power, we begin by identify
\begin{align*}
\int_\mathbb{T} \Phi_{n-1}^*(t)\frac{\nu(t)}{\ddi}\dd t=
\int_\mathbb{T}t^{n}\overline{\Phi_{n-1}(t)}\frac{\nu(t)}{\ddi t}\dd t.
\end{align*}
Hence plugging into the orthogonal relation
the expression of \( \Phi_n \) given in~\eqref{newphin}
\begin{align*}
\int_\mathbb{T}
\Phi_{n}(t)\overline{\Phi_{n-1}(t)}\frac{\nu(t)}{\ddi t}\dd t 
=
\int_\mathbb{T}
\big( t^n+\Phi_1^{n}t^{n-1}+\Phi_2^{n}t^{n-2}+\cdots \big) \overline{\Phi_{n-1}(t)}\frac{\nu(t)}{\ddi t}\dd t 
= 0
\end{align*}
we arrive, taking into account the orthogonality condition~\eqref{eq:ortogonalidade}, to the equation
\begin{align*}
\int_\mathbb{T}t^{n}\overline{\Phi_{n-1}(t)}\frac{\nu(t)}{\ddi t}\dd t 
+
\Phi_1^{n}
\int_\mathbb{T}
t^{n-1} \overline{\Phi_{n-1}(t)}\frac{\nu(t)}{\ddi t}\dd t 
= 0 ,
\end{align*}
and so the coefficient of the \( (-n-1) \)-th \( z \) power is
 \( \Phi_1^{n} b_{n-1}^{-1} \).
To determine the third coefficient,
\begin{align*}
\int_\mathbb{T} t \Phi_{n-1}^*(t)\frac{\nu(t)}{\ddi}\dd t=
\int_\mathbb{T}t^{n+1}\overline{\Phi_{n-1}(t)}\frac{\nu(t)}{\ddi t}\dd t.
\end{align*}
 Again, using the same argument, i.e.,
\begin{align*}
\int_\mathbb{T}
\Phi_{n+1}(t)\overline{\Phi_{n-1}(t)}\frac{\nu(t)}{\ddi t}\dd t 
=
\int_\mathbb{T}
\big( t^{n+1}+\Phi_1^{n+1}t^{n}+\Phi_2^{n+1}t^{n-1}+\cdots \big) \overline{\Phi_{n-1}(t)}\frac{\nu(t)}{\ddi t}\dd t 
= 0 ,
\end{align*}
we arrive, taking into account the orthogonality condition~\eqref{eq:ortogonalidade}, to the equation
\begin{align*}
\int_\mathbb{T}t^{n+1}\overline{\Phi_{n-1}(t)}\frac{\nu(t)}{\ddi t}\dd t
&=-\Phi_1^{n+1}\int_\mathbb{T}t^{n}\overline{\Phi_{n-1}(t)}\frac{\nu(t)}{\ddi t}\dd t-\Phi_2^{n+1}\int_\mathbb{T}t^{n-1}\overline{\Phi_{n-1}(t)}\frac{\nu(t)}{\ddi t}\dd t
\end{align*}
and so the coefficient of the \( (-n-2) \)-th \( z \) power is
\( (\Phi_1^{n+1}\Phi_1^n-\Phi_2^{n+1}) b_{n-1}^{-1} \).
Therefore, in general, the coefficients of the \( z \) powers in the expansion~\eqref{Gnestrelainf}
are given by 
\begin{align*}
\int_\mathbb{T}t^{n+j}\overline{\Phi_{n-1}(t)}\frac{\nu(t)}{\ddi t}\dd t
 & =
-\sum_{k=1}^{j+1}\Phi_{k}^{n+j} \int_\mathbb{T} t^{n-k+j}\overline{\Phi_{n-1}(t)}\frac{\nu(t)}{\ddi t}\dd t ,
j \in \mathbb N \cup \{ 0 \} .
\end{align*}
 From \eqref{G_n}, we have
\begin{align*}
G_{n}(z) & = \frac{1}{2\pi}\int_\mathbb{T}\frac{\Phi_{n}(t)\nu(t)}{\ddi t^n(t-z)}\dd t
 = -\sum_{j=0}^{\infty}\frac{1}{z^{j+1}}\frac{1}{2\pi}\int_\mathbb{T}t^j\Phi_{n}(t)\frac{\nu(t)}{\ddi t^n}\dd t
 \\ 
 &= -\sum_{j=0}^{\infty}\frac{1}{z^{n+j+1}}\frac{1}{2\pi}\int_\mathbb{T}t^{j+1}\Phi_{n}(t)\frac{\nu(t)}{\ddi t}\dd t .
 \end{align*} 
For the first coefficients, we observe that
\begin{align}\label{eqauxformgeral}
 \int_{\mathbb{T}} t\Phi_{n}(t) \frac{\nu(t)}{\ddi t}\dd t = \frac{\overline{\alpha}_{n}}{\kappa_n^{2}} .
\end{align}
In fact, using the Szeg\H{o} recurrence \eqref{Szegorecorrence}, we obtain
\begin{align*}
 \int_{\mathbb{T}} t\Phi_{n}(t) \frac{\nu(t)}{\ddi t}\dd t
 & = \int_{\mathbb{T}}\Phi_{n+1}(t)\frac{\nu(t)}{\ddi t}\dd t + \overline{\alpha}_{n} \int_{\mathbb{T}} \Phi_{n}^{*}(t)\frac{\nu(t)}{\ddi t}\dd t ,
\end{align*}
and taking into account~\eqref{ortogcu} and \eqref{ortogcuultima},
we arrive to~\eqref{eqauxformgeral}.
Now, proceeding as before departing from
\begin{align*}
 \int_{\mathbb{T}} t^2 \Phi_{n}(t) \frac{\nu(t)}{\ddi t}\dd t
 & = \int_{\mathbb{T}} t \Phi_{n+1}(t)\frac{\nu(t)}{\ddi t}\dd t + \overline{\alpha}_{n} \int_{\mathbb{T}} t \Phi_{n}^{*}(t)\frac{\nu(t)}{\ddi t}\dd t ,
\end{align*}
and using the previous calculations, we get 
\begin{align*}
 \int_{\mathbb{T}} t^2\Phi_{n}(t) \frac{\nu(t)}{\ddi t}\dd t&=\frac{\overline{\alpha}_{n+1}}{\kappa_{n+1}^2}-\Phi_{1}^{n+1}\frac{\overline{\alpha}_n}{\kappa_n^2}.
\end{align*}
Therefore, in general, we have the relation
\begin{align*}
 \int_{\mathbb{T}} t^{j+1}\Phi_{n}(t) \frac{\nu(t)}{\ddi t}\dd t
 & =\frac{\overline{\alpha}_{n+j}}{\kappa_{n+j}^2}-\overline{\alpha}_n\int_{\mathbb{T}} t^{n+j}\overline{\Phi_{n}(t)} \frac{\nu(t)}{\ddi t}\dd t.
\end{align*}
In this way, we get the asymptotic expansion for \( G_n \) at infinity~\eqref{Gninf}.
\end{proof}

Now, we apply the Riemann--Hilbert problem to derive recurrence relations for orthogonal polynomials and
associated functions.

\begin{proposition}
 Let \( Y_n \) be the solution of the Riemann--Hilbert problem, then for all \( n \in \mathbb N \), 
\begin{align}\label{tn}
Y_{n+1}(z)
 \begin{bmatrix}
1 & 0\\
0 & z
\end{bmatrix}
=T_n(z)Y_n(z),
 && \text{where} &&
T_n(z)=\begin{bmatrix}
z + \overline{\alpha}_n\alpha_{n-1} & 
\displaystyle
{ \overline{\alpha}_n}/{b_{n}}
 \\
\alpha_{n-1} b_n & 1
\end{bmatrix}
\end{align}
which is called the transfer matrix.
\end{proposition}

\begin{proof} The matrices \( Y_n \) and \( Y_n^{-1} \) are holomorphic at \( \mathbb{C}\setminus\mathbb{T} \), then
the matrix function
\begin{align*}
T_n(z)
=
Y_{n+1}(z)
 \begin{bmatrix}
1 & 0\\
0 & z
\end{bmatrix} 
Y_n^{-1}(z)
\end{align*}
 is holomorphic at \( \mathbb{C}\setminus\mathbb{T} \). 
However, \( T_n \) has no jump on the unit circle \( \mathbb{T} \). In fact, we have
\begin{align*}
 (T_n)_{+}(z) &=(Y_{n+1})_{+}(z)\begin{bmatrix}
1 & 0\\
0 & z
\end{bmatrix}(Y_n^{-1})_{+}(z)\\
 &= {(Y_{n+1})}_{-}(z)
\begin{bmatrix}
1 & \nu(z)/z^{n+1}\\
0 & 1
\end{bmatrix} \begin{bmatrix}
1 & 0\\
0 & z
\end{bmatrix}
\begin{bmatrix}
1 & -\nu(z)/z^{n}\\
0 & 1
\end{bmatrix} {(Y_{n}^{-1})}_{-}(z)\\
 &= {(Y_{n+1})}_{-}(z)
 \begin{bmatrix}
1 & 0\\
0 & z
\end{bmatrix}
 {(Y_{n}^{-1})}_{-}(z) = (T_n)_{-}(z), \quad z\in\mathbb{T}.
\end{align*}
Therefore, \( T_n \) is holomorphic in \( \mathbb{C} \) and it is an entire function.
Moreover, from the representation of \( Y_n \) and \( Y_n^{-1} \) given by~\eqref{ynnew} and~\eqref{inversaynnew}, respectively,
for \( z\in \mathbb C \setminus \mathbb{T} \), we obtain
\begin{align*}
 T_n(z) 
 &=\begin{bmatrix}
\Phi_{n+1}(z) & G_{n+1}(z) \\[.05cm]
-b_{n}\Phi^*_{n}(z) & -b_{n} G^*_{n}(z)
\end{bmatrix}
\begin{bmatrix}
1 & 0\\[.05cm]
0 & z
\end{bmatrix}
\begin{bmatrix}
-b_{n-1} G^*_{n-1}(z)& -G_{n}(z) \\[.05cm]
b_{n-1}\Phi^*_{n-1}(z) & \Phi_{n}(z)
\end{bmatrix}\\
&=\begin{bmatrix}
b_{n-1}(z\Phi_{n-1}^*(z)G_{n+1}(z)-\Phi_{n+1}(z) G^*_{n-1}(z))
 & -\Phi_{n+1}(z)G_{n}(z)+z\Phi_n(z)G_{n+1}(z) \\[.1cm]
b_nb_{n-1}(\Phi_n^*(z)G_{n-1}^*(z)-z\Phi^*_{n-1}(z)G_n^*(z)) & b_n(\Phi_n^*(z)G_n(z)-z\Phi_{n}(z)G_n^*(z))
\end{bmatrix} .
\end{align*}
Now, using \eqref{Gninf} and \eqref{Gnestrelainf}, and taking into account the behavior at infinity, and 
using Liouville's Theorem, we get
the explicit expression for \( T_n \) in~\eqref{tn}.
\end{proof}

As a direct consequence from equation \eqref{tn} we obtain the recurrence relations for the entries in \( Y_n \).

\begin{corollary}
 Let \( Y_n \) be the solution of the Riemann--Hilbert problem,
then for all \(n \in \mathbb N \),
\begin{align*}
 \Phi_{n+1}(z)&=(z+\overline{\alpha}_n\alpha_{n-1})\Phi_n(z)-\frac{b_{n-1}}{b_n}\overline{\alpha}_n\Phi_{n-1}^*(z) , \\
 -b_{n}\Phi_n^{*}(z)&=\alpha_{n-1}b_n\Phi_n(z)-b_{n-1}\Phi_{n-1}^*(z) , \\
 zG_{n+1}(z)&=(z+\overline{\alpha}_n\alpha_{n-1})G_n(z)-\frac{b_{n-1}}{b_n}\overline{\alpha}_nG_{n-1}^*(z) , \\
 -b_nzG_n^*(z)& =\alpha_{n-1}b_nG_{n}(z)-b_{n-1}G_{n-1}^*(z) .
\end{align*}
\end{corollary}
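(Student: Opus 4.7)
The plan is to read the four identities off from the matrix equation \eqref{tn} by performing the indicated matrix multiplications on both sides and comparing entries. Since the theorem preceding the corollary already gives the explicit form of $T_n(z)$ and the explicit factorisation $Y_{n+1}(z)\begin{bmatrix}1&0\\0&z\end{bmatrix}=T_n(z)Y_n(z)$, no further analytic work is needed: the corollary is a purely algebraic consequence.

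Concretely, I would first compute the left-hand side
\[
Y_{n+1}(z)\begin{bmatrix}1 & 0\\ 0 & z\end{bmatrix}
=\begin{bmatrix}\Phi_{n+1}(z) & zG_{n+1}(z)\\[.05cm] -b_n\Phi_n^*(z) & -b_n z G_n^*(z)\end{bmatrix},
\]
using the representation \eqref{ynnew} for $Y_{n+1}$ and the fact that the diagonal matrix only scales the second column by $z$. Next I would compute the right-hand side $T_n(z)Y_n(z)$ by multiplying the $2\times 2$ matrix in \eqref{tn} against the expression \eqref{ynnew} for $Y_n$.

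The four scalar identities of the corollary then follow by matching entries. The $(1,1)$-entries give the first relation for $\Phi_{n+1}$, the $(2,1)$-entries give the relation
\[
-b_n\Phi_n^*(z)=\alpha_{n-1}b_n\Phi_n(z)-b_{n-1}\Phi_{n-1}^*(z),
\]
and the $(1,2)$ and $(2,2)$-entries produce the two recurrences for the second-kind functions $G_{n+1}$ and $G_n^*$, respectively. The factor $z$ in the left-hand columns explains why $zG_{n+1}(z)$ and $zG_n^*(z)$ appear instead of $G_{n+1}(z)$ and $G_n^*(z)$.

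There is no real obstacle here: the work was done in proving \eqref{tn}, and the corollary amounts to a bookkeeping exercise. The only point that deserves a brief remark is that the four identities are consistent with, and in fact equivalent to, the Szeg\H{o} recurrence \eqref{Szegorecorrence}, the companion relations \eqref{recszego}, and the recurrences \eqref{eq:G_n}--\eqref{eq:G_n^*} for the second-kind functions, after using $b_{n-1}/b_n=\kappa_{n-1}^2/\kappa_n^2=1-|\alpha_{n-1}|^2$; this can be noted in one line as a sanity check, but is not required for the proof.
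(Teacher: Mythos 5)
Your proposal is correct and is exactly the argument the paper intends: the paper presents this corollary as ``a direct consequence'' of equation \eqref{tn}, i.e.\ one reads off the four identities by comparing the entries of \(Y_{n+1}(z)\operatorname{diag}(1,z)\) and \(T_n(z)Y_n(z)\) using the representation \eqref{ynnew}, which is precisely what you do. Your closing consistency check against the Szeg\H{o} recurrence and \eqref{eq:G_n}--\eqref{eq:G_n^*} is a nice optional remark but, as you say, not needed.
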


\section{Fundamental matrices from Riemann--Hilbert problem}\label{sec3}

We introduce the constant jump fundamental matrix,~\( Z_n \), which will be instrumental in what follows 
\begin{align}\label{Zn}
 Z_n(z)=Y_n(z)C_n(z) ,
 && \text{where} 
 &&
 C_n(z)
 ={\begin{bmatrix}
\frac{z^{-n/2}}{(\nu(z))^{-1/2}} & 0 \\
0 & \frac{z^{n/2}}{(\nu(z))^{1/2}}
\end{bmatrix}},
&& n \in \mathbb N
 .
\end{align}

\begin{proposition}\label{propnewZn}
Let \( Y_n \) be the solution of the Riemann--Hilbert problem, then
\begin{align*}
 \sqrt{z} \, Z_{n+1}(z) = T_n(z) \, Z_n(z), && n \in \mathbb N
 ,
\end{align*}
where \( T_n \) is defined in~\eqref{tn}.
\end{proposition}

\begin{proof} 
It follows from direct computations, using~\eqref{tn} and~\eqref{Zn}.
\end{proof}

Now, we introduce the structure matrix, given in terms of the logarithmic derivatives of the fundamental matrix defined in~\eqref{Zn}
\begin{align}\label{Mn}
 M_n(z)=Z_n^\prime(z)Z_n^{-1}(z), && n \in \mathbb N
 .
\end{align}
We study the analytic properties of the functions just defined, for the case of \emph{generalized semiclassical weights} \( \nu \), i.e. weights \( \nu \) satisfying
\begin{align}
\label{eq:pearson_nu}
z A(z) \nu^\prime(z) =
q(z) \nu(z) ,
\end{align}
with \( A\) and \( q \) are a given polynomial and an entire function, respectively,
and the differential equation is taken off the set of the zeros of the polynomial \( z A(z) \), that will be called~\( \mathcal{Z}_A \).

Note that, when \( q(z) = B (z) - z A^\prime (z) \), where \(B \) is a polynomial, \( w \) satisfies~\eqref{Eq-Tipo-Pearson-1-int} and so is a semiclassical weight.


\begin{proposition}\label{teo48} 
Let \( \nu (z) 
 \) be a
generalized 
semiclassical weight (cf.~\eqref{eq:pearson_nu}), then the matrix function \( Z_n \) definined by~\eqref{Zn} is 
such that:
\begin{enumerate}[\rm i)]
\item
\( Z_n \) is holomorphic in \( \mathbb{C}\setminus ( \mathbb{T} \cup \mathcal{Z}_A ) \);
\item
\( Z_n \) satisfies the constant jump condition
\begin{align*}
{(Z_n})_{+}(t)={(Z_n)}_{-}(t)
\begin{bmatrix}
1 & 1\\
0 & 1
\end{bmatrix}, && t\in\mathbb{T};
\end{align*}
\item
\( Z_n \) has the following asymptotic behavior at infinity
\begin{align*}
Z_n(z)
= 
\Big( \mathbf{\operatorname I}_2 + \operatorname{O} \big( \frac 1 z \big) \Big)
\begin{bmatrix}
z^{-n/2}\nu^{-1/2}(z) & 0\\
0 & z^{n/2}\nu^{1/2}(z) 
\end{bmatrix}
&&
\text{as} && z\to\infty .
\end{align*}
\end{enumerate}
\end{proposition} 

\begin{proof}
\begin{enumerate}[\rm i)]
\item
Since \( C_n \) is a matrix of entire functions in \( \mathbb{C}\setminus ( \mathbb{T}\cup \mathcal{Z}_A ) \) and \( Y_n \) is a holomorphic matrix in \( \mathbb{C }\setminus\mathbb{T} \), then \( Z_n \) is holomorphic at \( \mathbb{C}\setminus \big( \mathbb{T}\cup \mathcal{Z}_A \big) \).
\item
For \( t\in\mathbb{T} \), we have
\begin{align*}
{(Z_n})_{+}(t) &= {(Y_n})_{+}(t){\begin{bmatrix}
\frac{t^{-n/2}}{\nu^{-1/2}} & 0 \\
0 & \frac{t^{n/2}}{\nu^{1/2}}
\end{bmatrix}} 
 = {(Y_n)}_{-}(t)
\begin{bmatrix}
1 & \frac{\nu(t)}{t^n}\\
0 & 1
\end{bmatrix}{\begin{bmatrix}
\frac{t^{-n/2}}{\nu^{-1/2}} & 0 \\
0 & \frac{t^{n/2}}{\nu^{1/2}}
\end{bmatrix}}\\
& = {(Y_n)}_{-}(t)
\begin{bmatrix}
\frac{t^{-n/2}}{\nu^{-1/2}} & \frac{\nu^{1/2}}{t^{n/2}} \\
0 & \frac{t^{n/2}}{\nu^{1/2}}
\end{bmatrix} 
 = {(Y_n)}_{-}(t)
{\begin{bmatrix}
\frac{t^{-n/2}}{\nu^{-1/2}} & 0 \\
0 & \frac{t^{n/2}}{\nu^{1/2}}
\end{bmatrix}}\begin{bmatrix}
1 & 1\\
0 & 1
\end{bmatrix}\\
& = {(Z_n)}_{-}(t)
\begin{bmatrix}
1 & 1\\
0 & 1
\end{bmatrix}.
\end{align*}
\end{enumerate}

{\noindent}iii) It follows from the asymptotic behavior of the matrix \( Y_n \) at infinity. 
\end{proof}

\begin{proposition}
\label{teo:geral}
Let \( \nu(z) \) be a
generalized
semiclassical weight.
Then, the corresponding structure matrix~\( M_n \), \( n\in\mathbb{N} \), given in~\eqref{Mn}, is a holomorphic function in \( \mathbb {C} \setminus \mathcal{Z}_A \).
\end{proposition}

\begin{proof}
 Since \( Z_n \) is holomorphic in \( \mathbb{C}\setminus( \mathbb{T}\cup\mathcal{Z}_A ) \), then
\( M_n(z)=Z_n^\prime (z)Z_n^{-1}(z) \) is also holomorphic in \( \mathbb{C}\setminus( \mathbb{T}\cup\mathcal{Z}_A ) \). 
Due to the fact that \( Z_n \) has a constant jump on \( \mathbb{T}\cup \mathcal{Z}_A \), the~matrix function \( Z_n^\prime \) has the same constant jump on \( \mathbb{T}\cup \mathcal{Z}_A \), so the matrix \( M_n \) has no jump on \( \mathbb{T}\cup \mathcal{Z}_A \), and it follows that in each element of \( \mathcal Z_A \), the functions \( M_n \) have an isolated singularity. 
\end{proof}

The next proposition is very important for finding nonlinear difference equations satisfied by the Verblunsky coefficients.

\begin{proposition}\label{propnula} 
Let \( \nu(z) \) be a
generalized
semiclassical weight.
Then, the corresponding structure matrix~\( M_n \), \( n\in\mathbb{N} \), given in~\eqref{Mn}
 satisfies
\begin{align}\label{curvatureformula}
T_n^\prime - \frac 1 {2z} T_n
 =
{M}_{n+1}T_n -T_n {M}_n ,
&& n \in \mathbb N ,
\end{align}
which is a zero curvature formula.
\end{proposition}

\begin{proof}
From Proposition~\ref{propnewZn} we know that \( \sqrt{z} \, Z_{n+1}=T_nZ_n \), then taking derivative
 and again using 
Proposition~\ref{propnewZn}, we obtain
\begin{align*}
\Big\{ T_n^\prime -\frac{1}{2z}T_n +T_n{M}_n-{M}_{n+1}T_n \Big\} Z_n
=\mathbf{0}_{2}.
\end{align*}
Therefore, since \( Z_n \) is invertible, 
the result follows.
\end{proof}

We also have a kind of reciprocal of Proposition~\ref{propnula}.

\begin{proposition}\label{propeqfund1} 
Let \( \nu \) be a H\"older type weight defined on \( \mathbb T \), with sequences of transfer matrices \( \{ T_n \}_{n \in \mathbb N} \) and constant jump fundamental matrices \( \{ Z_n \}_{n \in \mathbb N} \).
If \( M_n \), \( n\in\mathbb{N} \), satisfies~\eqref{curvatureformula}, with
 \( M_0(z) \coloneqq Z_0^\prime(z) Z_0^{-1}(z)\). Then,
\( \{ M_n \}_{n \in \mathbb N} \) satisfies~\eqref{Mn}. 
\end{proposition}

\begin{proof}
Multiplying~\eqref{curvatureformula} by \( Z_n \), and taking into account that
\( \sqrt{z} \, Z_{n+1}=T_nZ_n \), we arrive to
\begin{align*}
\sqrt{z} \left( Z^\prime_{n+1} - M_{n+1} Z_{n+1} \right) =T_n \left( Z^\prime_{n} - M_{n} Z_{n} \right) ,
&&
n \in \mathbb N.
\end{align*}
Multiplying by \( (\sqrt{z})^n \) and iterating the procedure we arrive to
\begin{align*}
(\sqrt{z})^{n+1} \left( Z^\prime_{n+1} - M_{n+1} Z_{n+1} \right)
 = T_n T_{n-1} \cdots T_0 \left( Z^\prime_{0} - M_{0} Z_{0} \right) ,
\end{align*}
and taking into account our hypothesis we get that~\eqref{Mn} takes place.
\end{proof}

Next, we state a second curvature formula.

\begin{proposition}\label{propeqfund2} 
Let \( \nu(z) \) be a
generalized
semiclassical weight on \( \mathbb T \).
Then, the corresponding structure matrix~\( M_n \), \( n\in\mathbb{N} \), given in~\eqref{Mn}
 satisfies
\begin{align*}
{M}_{n+1}\Big\{T_n^\prime -\frac{1}{2z}T_n\Big\}+\Big\{T_n^\prime-\frac{1}{2z}T_n\Big\}{M}_{n}={M}_{n+1}^2T_n-T_n{M}_n^2.
\end{align*}
\end{proposition}

\begin{proof}
Multiplying on the left of \eqref{curvatureformula} by \( {M}_{n+1} \), we get
\begin{align}\label{eq1}
{M}_{n+1} T_n^\prime-{M}_{n+1}\frac{1}{2z}T_n={M}_{n+1}^2T_n-{M}_{n+1}T_n{M}_n
\end{align}
and on the other hand, multiplying on the right by \( {M}_n \) we obtain
\begin{align}\label{eq2}
T_n^\prime {M}_n-{M}_{n+1}\frac{1}{2z}T_n {M}_n={M}_{n+1}^2T_n {M}_n- {M}_{n+1}T_n {M}_n^2.
\end{align}
Adding~\eqref{eq1} and~\eqref{eq2}, we get the desired equation.
\end{proof}

Now, we will derive a second order differential operator for the sequence of matrix function \( \{Y_n \}_{n \in \mathbb N} \) associated with a generalized semiclassical weight.

\begin{proposition}
\label{propeqdif2}
Let \(\nu \) be a
generalized
semiclassical weight on \( \mathbb T \).
Then \(\{ Y_n \}_{n \in \mathbb N} \)
 satisfies 
\begin{align}\label{op2orden}
 Y_n^{\prime\prime}+ 2Y_n^{\prime} C_n^{\prime}C_n^{-1}+Y_n C_n^{\prime\prime} C_n^{-1} = \big( {M}_n^{\prime}+ {M}_n^2 \big) \, Y_n.
\end{align}
\end{proposition}

\begin{proof}
Taking derivative on~\eqref{Mn}, i.e. \( Z_n^{\prime}Z_n^{-1} ={M}_n \) we arrive to
\begin{align*}
Z_n^{\prime\prime} Z_n^{-1} - \big( Z_n^{\prime}Z_n^{-1} \big) \big( Z_n^{\prime}Z_n^{-1} \big) = {M}_n^{\prime} .
\end{align*}
Again, using~\eqref{Mn}, we get
\begin{align}\label{auxiliardadem}
Z_n^{\prime\prime} = \big( {M}_n^{\prime} + M_n^2 \big) Z_n .
\end{align}
By definition, \( Z_n=Y_nC_n \), and so
\begin{align*}
Z_n^{\prime}& =Y_n^{\prime}C_n+ Y_n^{\prime}C_n^{\prime} ,
 &&
Z_n^{\prime\prime} =Y_n^{\prime\prime}C_n+2Y_n^{\prime}C_n^{\prime}+Y_nC_n^{\prime\prime} .
\end{align*}
Ploughing this into~\eqref{auxiliardadem} we get the desired result.
\end{proof}

The second order differential equation in Proposition~\ref{propeqdif2}, given for \( \{Y_n\}_{n \in \mathbb N} \), could trace back to a first order one for the \( \{Z_n\}_{n \in \mathbb N} \), i.e.~\eqref{Mn}.

\begin{proposition}\label{novaprop} 
Let \( \nu \) be a H\"older type weight defined on \( \mathbb T \).
If \(\{ Y_n \}_{n \in \mathbb N} \) satisfies~\eqref{op2orden}, then~\( \{ Z_n \}_{n \in \mathbb N} \) satisfies
\begin{align}
\label{eq:semic}
Z_n^\prime = -T_n^{-1} (-z) \Big\{ z \big( (M_{n+1}^\prime + M_{n+1}^2)T_n - T_n (M_{n}^\prime + M_{n}^2) \big) + T_n^\prime - \frac 3 {4z} T_n \Big\} Z_n , && n \in \mathbb N .
\end{align}
\end{proposition}

\begin{proof}
Multiplying equation~\eqref{op2orden} by \( C_n \), and taking into account~\eqref{Zn} we get that~\eqref{auxiliardadem} takes place. Rewriting \eqref{auxiliardadem} in \( n+1 \) we see that
\begin{align*}
Z_{n+1}^{\prime\prime} = (M_{n+1}^\prime + M_{n+1}^2)Z_{n+1} .
\end{align*}
By Proposition~\ref{propnewZn} we know that \( Z_{n+1} = z^{-1/2} T_n Z_n \), hence substituting this into the last equation, and making some simplifications we arrive to
\begin{align*}
\frac 3 4 T_n Z_n - z (T_n^\prime Z_n + T_n Z_n^\prime) + z^2 \big( 2 T_n^\prime Z_n^\prime + T_n Z_n^{\prime\prime} \big) = z^2 (M_{n+1}^\prime + M_{n+1}^2) T_n Z_n .
\end{align*}
Now, applying our hypothesis, i.e. \( Z_{n}^{\prime\prime} = (M_{n}^\prime + M_{n}^2) Z_n \) we get
\begin{align*}
(-T_n + 2 z T_n^\prime) Z_n^\prime = \Big\{ z \big( (M_{n+1}^\prime + M_{n+1}^2)T_n - T_n (M_{n}^\prime + M_{n}^2) \big) + T_n^\prime - \frac 3 {4z} T_n \Big\} Z_n ,
\end{align*}
and taking into account that \( -T_n (z) + 2 z T_n^\prime (z) = - T_n (-z) \)
we arrive to~\eqref{eq:semic}.
\end{proof}


\section{Examples}\label{sec4}

As we have seen in Section~\ref{sec3} the matrix functions \( M_n \) are a key to understanding generalized semiclassical orthogonal polynomials. When \( A \) is equal to \( z \) or \( z-1 \) the weight function is of the type
\begin{align*}
 \nu (z) = (1-z)^\alpha z^\beta \dde^{-\gamma/z} H (z) ,
\end{align*} 
for some constants \( \alpha, \beta, \gamma \in \mathbb C \)
and where \( H \) is an entire function.

Here we will take the generalized modified Bessel and generalized modified Jacobi as a case study.

\subsection{Modified Bessel}

Here we consider the generalized semiclassical weight function 
\begin{align*}
w(\theta ) = 
 \dde ^{\ell \cos(\theta )} H ( \dde^{\ddi \theta})&& \mbox{and} &&
\nu(z) = 
\dde^{\ell (z+z^{-1})/2} H (z), && z=\dde^{\ddi \theta}, && \theta \in[0,2\pi],
\end{align*}
\noindent where \( \ell>0 \) is a real parameter. In the case when \( H (z) = 1 \), the Verblunsky coefficients are real and depending of \( \ell \). This is the weight function related to the modified Bessel polynomials~\cite{Is05}.

We know from Proposition~\ref{teo48} that the matrix \( Z_n \) defined in~\eqref{Zn} related to the generalized modified Bessel weight function is analytic in \( \mathbb C \setminus (\mathbb T \cup \{ 0 \}) \), as \( \mathcal Z_A = \{ 0 \} \).
Furthermore, from Proposition~\ref{teo:geral}, we know that the corresponding structure matrix~\( M_n \), \( n\in\mathbb{N} \), given in~\eqref{Mn}, is a holomorphic function in \( \mathbb {C} \setminus \mathcal{Z}_A \).

Now we will study the analytic character for the structure matrix associated with orthogonal polynomials in relation to the \emph{generalized modified Bessel} weight function.

\begin{proposition}
\label{teo:mnbessel}
Let \( \nu(z)=\dde ^{\ell(z+z^{-1})/2} H(z) \), where \( H \) is an entire function.
Then, the corresponding structure matrix \( M_n \), \( n\in\mathbb{N} \), given in~\eqref{Mn}, is a holomorphic function in \( \mathbb {C}\setminus \{0\}
 \) with a pole of order \( 2 \) at \( z=0 \).
\end{proposition}

\begin{proof}
Note~that
\begin{align*}
M_n(z) &= Z_n^\prime (z)Z_n^{-1}(z) = Y_n^\prime (z)Y_n^{-1}(z)+Y_n(z)C_n^\prime(z)C_n^{-1}(z)Y_n^{-1}(z). 
\end{align*}
Multiplying both sides of the last equation by \(z^2 \), we get 
\begin{align*}
z^2M_n(z)=z^2Y_n^\prime(z)Y_n^{-1}(z)+z^2Y_n(z)C_n^\prime(z)C_n^{-1}(z)Y_n^{-1}(z).
\end{align*}
Therefore, using~\eqref{G_n_0},
we obtain 
\begin{align*}
& \lim_{z \to 0}z^2M_n(z)={\begin{bmatrix}
\frac{ \ell }{4}(b_{n-1}b_n^{-1}-\alpha_{n-1}^2) & -\frac{ \ell }{2}b_n^{-1}\alpha_{n-1}\\
 -\frac{ \ell }{2}b_{n-1}\alpha_{n-1} & -\frac{ \ell }{4}( b_{n-1}b_n^{-1}-\alpha_{n-1}^2)
\end{bmatrix}}\neq \mathbf{0}_{2},
\end{align*}
where \( \mathbf{0}_{2} \) is the zero matrix.
It follows that \( M_n \) has a pole of order two at \( z=0 \).
\end{proof}

In the hypothesis of the Proposition~\ref{teo:mnbessel} we have that
\( \widetilde{M}_n=z^2 M_n \), 
is, for each \( n \in \mathbb N \), an entire matrix function.
Now, we will explicitly determine the \( \widetilde M_n \) just defined for the case when \( H (z) = 1\).

%
We know that \( \widetilde{M}_n(z)=z^2Y_n^\prime(z)Y_n^{-1}(z)+z^2Y_n(z)C_n^\prime(z)C_n^{-1}(z)Y_n^{-1}(z) \).
 From \eqref{ynnew},~\eqref{inversaynnew}, and~\eqref{Zn}, and using~\eqref{Gninf} and~\eqref{Gnestrelainf}, we obtain the asymptotic behavior for~\( z\to\infty \),
{\small
\begin{align*}
 \widetilde{M}_n(z)
 =
\begin{bmatrix}
\frac{\ell \, z^2}{4}+\frac{n z}{2}-\frac{b_{n-1}b_n^{-1}\alpha_n\alpha_{n-2}+ \ell +4\Phi_1^n}{4} & 
\frac{\alpha_n}{b_{n}} ( \frac{\ell z}{2}+n+1+\frac{ \ell }{2}(\Phi_1^n-1))+\frac{ \ell \alpha_{n-1}}{2b_{n+1}}\\
b_{n-1}\big( ( \frac{\ell z}{2} + n-1-\frac{ \ell }{2}\Phi_1^n)\alpha_{n-2}+\overline{\Phi_{n-2}^{n-1}} \big) & 
-\frac{\ell \, z^2}{4}-\frac{n z}{2}-\frac{b_{n-1}b_n^{-1}\alpha_n\alpha_{n-2}- \ell -4\Phi_1^n}{4} 
\end{bmatrix}
+ \operatorname{O}(z^{-1}) .
\end{align*}
}
{\hspace{-.075cm}}Therefore, since \( \widetilde{M}_n \) is an entire matrix function, the Liouville Theorem implies that
\begin{align}\label{segundaparteMntilde}
 \widetilde{M}_n(z)=
 \begin{bsmallmatrix}
\frac{ \ell }{4}z^2+\frac{n}{2}z-\frac{1}{4}(b_{n-1}b_n^{-1}\alpha_n\alpha_{n-2}+\ell +4\Phi_1^n) 
& 
b_{n}^{-1}\frac{ \ell }{2}\alpha_n z+b_{n}^{-1}\alpha_n ( n+1+\frac{ \ell }{2}(\Phi_1^n -1) ) +\frac{ \ell }{2}b_{n+1}^{-1}\alpha_{n-1}
 \\
b_{n-1}\big( \frac{ \ell }{2}\alpha_{n-2}z+b_{n-1} ( n-1-\frac{ \ell }{2}\Phi_1^n)\alpha_{n-2}+\overline{\Phi_{n-2}^{n-1}} \big) 
&
 -\frac{ \ell }{4}z^2-\frac{n}{2}z-\frac{1}{4}(b_{n-1}b_n^{-1}\alpha_n\alpha_{n-2}- \ell -4\Phi_1^n) 
\end{bsmallmatrix} 
 .
\end{align}
Note that \( \widetilde{M}_n \) in \eqref{segundaparteMntilde} is a polynomial function of degree \( 2 \), that is, we can write it as \( \widetilde{M}_n(z)=\widetilde{F}_n^2 z^2+\widetilde{F}_n^{1}z+\widetilde {F}_n^{0} \), where \( \widetilde{F}_n^{2} \), \( \widetilde{F}_n^{1} \), and \( \widetilde{F}_n^0 \) are constants and
\(\displaystyle
\lim_{z \to 0}\widetilde{M}_n(z)=\widetilde{F}_n^{0} \).
Therefore, using the 
Lemma \ref{lemma6}, we get
\begin{align*}
& \widetilde{F}_n^{0}={\begin{bmatrix}
\frac{ \ell }{4} ( b_{n-1}b_n^{-1}-\alpha_{n-1}^2 ) & -\frac{ \ell }{2}b_n^{-1}\alpha_{n-1}\\
 -\frac{ \ell }{2}b_{n-1}\alpha_{n-1} & -\frac{ \ell }{4} ( b_{n-1}b_n^{-1}-\alpha_{n-1}^2)
\end{bmatrix}}.
\end{align*}
Moreover, from \eqref{segundaparteMntilde}, we have
\begin{align*}
& \widetilde{F}_n^{1}={\begin{bmatrix}
\frac{n}{2} & \frac{ \ell }{2}b_{n}^{-1}\alpha_n \\
\frac{ \ell }{2}b_{n-1}\alpha_{n-2} & -\frac{n}{2}
\end{bmatrix}} &&
\mbox{and}
&& \widetilde{F}_n^{2}={\begin{bmatrix}
\frac{ \ell }{4} & 0 \\
0 & -\frac{ \ell }{4}
\end{bmatrix}}.
\end{align*}
Therefore, when \( z\to 0 \), we arrive 
to
\begin{align}\label{matriztilde}
& \widetilde{M}_n(z)=
\begin{bmatrix}
\frac{ \ell }{4}z^2+\frac{n}{2}z+\frac{ \ell }{4} ( b_{n-1}b_n^{-1}-\alpha_{n-1}^2) & -\frac{ \ell }{2}b_n^{-1}(\alpha_{n-1}-\alpha_n z) \\ 
-\frac{ \ell }{2}b_{n-1}(\alpha_{n-1}-\alpha_{n-2}z) & -\frac{ \ell }{4}z^2-\frac{n}{2}z-\frac{ \ell }{4} ( b_{n-1}b_n^{-1}-\alpha_{n-1}^2 )
\end{bmatrix} .
\end{align}

The zero curvature formula is important for finding nonlinear difference equations satisfied by Verblunsky coefficients.
As a consequence of Proposition~\ref{propnula} we can see that
the matrix function, \( \widetilde{M}_n \), defined in \eqref{matriztilde} satisfies
\begin{align}\label{curvatureformulabessel}
z^2T_n^\prime +T_n\widetilde{M}_n-\frac{z}{2}T_n-\widetilde{M}_{n+1}T_n=
\mathbf{0}_{2} ,
&& n \in \mathbb N .
\end{align}
In fact,
we only have to multiply equation~\eqref{curvatureformula} by \( z^2 \) and using the fact that \( \widetilde M_n = z^2 M_n \), \( n \in \mathbb N \).
As an easy consequence of~\eqref{curvatureformulabessel} we get that
\begin{align}\label{dp2curvature}
\alpha_{n}(\ell) + \alpha_{n-2}(\ell) = -\frac{2n}{ \ell } \frac{\alpha_{n-1}(\ell)}{1-\alpha_{n-1}^2(\ell)}, && n \geqslant 2,
&& \ell \in \mathbb R .
\end{align}
The equation \eqref{dp2curvature} corresponds to the discrete Painlev\'e equation (dP\(_{\mbox{II}} \)).

%

Now, we derive the differential properties for the functions \( \Phi_n \), \( \Phi_n^* \), \( G_n \) and \( G_n^* \) coming from the Riemann--Hilbert problem associated with the modified Bessel weight.
In fact, multiplying by \( z^2 \), equation~\eqref{Mn} we obtain
\begin{align*}
z^2 Z_n^\prime(z) = \widetilde M_n (z) Z_n (z), && n \in \mathbb N .
\end{align*}
Taking into account~\eqref{Zn} we arrive to
\begin{align}\label{op1bessel}
 z^2Y_n^\prime 
 =
\widetilde{M}_nY_n-z^2Y_nC_n^\prime C_n^{-1} , && n \in \mathbb N . 
\end{align}
The first order matrix differential equation \eqref{op1bessel} splits into the following differential relations 
\begin{align}
 z^2\Phi_n^\prime (z) & =\bigg\{nz+\frac{ \ell }{2}-\frac{ \ell }{2}\alpha_{n-1}^2\bigg\}\Phi_n(z)+\frac{ \ell }{2}b_{n-1}b_n^{-1} ( \alpha_{n-1}-\alpha_n z ) \Phi_{n-1}^*(z)
 \label{eqdiferbessel}
 ,
 \\
 \nonumber z^2G_n^\prime (z) & =\bigg\{\frac{ \ell }{2}z^2-\frac{ \ell }{2}\alpha_{n-1}^2\bigg\}G_n(z)+\frac{ \ell }{2}b_{n-1}b_n^{-1} ( \alpha_{n-1}-\alpha_n z ) G_{n-1}^*(z)
 ,
 \\
\nonumber z^2 \big( \Phi_{n-1}^* \big)^\prime (z) & 
 =\frac{ \ell }{2} ( \alpha_{n-1}-\alpha_{n-2}z ) \Phi_n(z)+\bigg\{-\frac{ \ell }{2}z^2+\frac{ \ell }{2}\alpha_{n-1}^2\bigg\} \Phi_{n-1}^*(z)
 ,
 \\
\nonumber z^2 \big( G_{n-1}^* \big) ^\prime (z) & 
 =\frac{ \ell }{2} ( \alpha_{n-1}-\alpha_{n-2}z ) G_n(z)+\bigg\{-nz-\frac{ \ell }{2}+\frac{ \ell }{2}\alpha_{n-1}^2\bigg\}G_{n-1}^*(z)
 .
 \end{align} 


Note that with some manipulations, it can be shown that equation \eqref{eqdiferbessel} is equivalent to the structure relation
\begin{align}\label{relest}
z\Phi_{n}^\prime(z)=n\Phi_n(z)+\frac{\ell}{2}\frac{\kappa_{n-1}^2}{\kappa_n^2}(\Phi_{n-1}(z)-\overline{\alpha}_n\Phi_{n-1}^*(z))
\end{align}
presented in \cite{BRS23}. 
Furthermore, doing \(z=0 \) in \eqref{relest} and some more calculations, 
it can be shown that \eqref{eqdiferbessel} coincides with equation \eqref{structurebessel}. 

Applying Proposition~\ref{propeqdif2}, we can derive second order differential equation for the matrix~\(Y_n \). In fact, we only have to multiply~\eqref{op2orden} by \( z^2 \), to obtain
\begin{align*}
z^2Y_n^{\prime\prime}+2Y_n^{\prime} ( z^2C_n^{\prime}C_n^{-1} )+Y_n ( z^2 C_n^{\prime\prime} C_n^{-1}) =\Big( z^2 {M}_n^{\prime} + \frac{\widetilde{M}_n^2}{z^2} \Big) \, Y_n.
\end{align*}
We can see that
\( \displaystyle z^2 {M}_n^{\prime} = \widetilde M_n^\prime - \frac{2}{z} \widetilde M_n \)
and
\begin{align*}
 \widetilde M_n = z^2 Z_n^\prime Z_n^{-1} =
 z^2 \big( Y_n^\prime Y_n^{-1} + Y_n C_n^\prime C_n^{-1} Y_n^{-1} \big).
\end{align*}
Substituting this into the last equation we arrive to
\begin{align}\label{op2bessel}
 z^2Y_n^{\prime\prime}+2Y_n^{\prime} ( z^2C_n^{\prime}C_n^{-1}+z \, \mathbf{\operatorname I}_2 )+Y_n ( z^2C_n^{\prime\prime} C_n^{-1}+2z \, C_n^{\prime} C_n^{-1} ) =\Big( \widetilde{M}_n^{\prime}+\frac{\widetilde{M}_n^2}{z^2} \Big) \, Y_n.
\end{align}

The second order matrix differential equation \eqref{op2bessel} splits into the following differential~relations 
\begin{align*}
\begin{multlined}[t][.95\textwidth]
z^2\Phi_n^{\prime\prime}(z)+\left( \frac{ \ell }{2}z^2+(2-n)z-\frac{ \ell }{2}\right) \Phi_n^{\prime}(z)+\bigg( -\frac{\ell n}{2}z-\frac{\ell^2}{4}-n\\
 -\frac{\ell^2}{4}((1-\alpha_{n-1}^2)\alpha_n\alpha_{n-2}-\alpha_{n-1}^2) \bigg) \Phi_n(z)=-\frac{ \ell }{2}(1-\alpha_{n-1}^2)\alpha_n\Phi_{n-1}^*(z) ,
\end{multlined} 
 \\
\begin{multlined}[t][.95\textwidth]
z^2G_n^{\prime\prime}(z)+\left( -\frac{\ell z^2}{2}+\left(n+2\right) z+\frac{ \ell }{2}\right) G_n^{\prime}(z)-\bigg( \ell\big(\frac{n}{2}+1\big) z+\frac{\ell^2}{4}\\
+\frac{\ell^2}{4}( (1-\alpha_{n-1}^2)\alpha_n\alpha_{n-2}-\alpha_{n-1}^2]\bigg) G_n(z)=-\frac{ \ell }{2}(1-\alpha_{n-1}^2)\alpha_n G_{n-1}^*(z),
\end{multlined}
 \\ 
\begin{multlined}[t][.95\textwidth]
z^2 \big( \Phi_{n-1}^* \big) ^{\prime\prime}(z)
+\left( \frac{\ell z^2}{2}+\left(2-n\right) z -\frac{ \ell }{2}\right) \big( \Phi_{n-1}^* \big)^{\prime}(z)-\bigg( \ell \big(\frac{n}{2}-1\big)z+\frac{\ell^2}{4} \\
+\frac{\ell^2}{4} ( (1-\alpha_{n-1}^2)\alpha_n\alpha_{n-2}-\alpha_{n-1}^2) \bigg) \Phi_{n-1}^*(z)=-\frac{ \ell }{2}\alpha_{n-2}\Phi_n(z),
\end{multlined}
\\ 
\begin{multlined}[t][.95\textwidth]
z^2 \big( G_{n-1}^* \big)^{\prime\prime}(z)+\left(-\frac{\ell z^2}{2}+\left(n+2\right) z+\frac{ \ell }{2}\right) \big( G_{n-1}^* \big)^{\prime}(z)-\bigg( \frac{\ell n}{2}z+\frac{\ell^2}{4}-n
 \\
+\frac{\ell^2}{4}( (1-\alpha_{n-1}^2)\alpha
_n\alpha_{n-2}-\alpha_{n-1}^2) \bigg)G_{n-1}^*(z)=-\frac{ \ell }{2}\alpha_{n-2}G_n(z).
\end{multlined}
\end{align*}

To get these second order differential relations we only have to substitute the representation of \(\widetilde M_n \) given in~\eqref{matriztilde} into~\eqref{op2bessel}.
In fact,
from~\eqref{matriztilde},
we see that
\begin{align*}
\frac{
\widetilde M_n^2 
}{z^2 } 
 & =
 \Big(\frac{\ell^2}{16}z^2+\frac{\ell n}{4}z+\frac{\ell^2}{8}+\frac{n^2}{4}-\frac{\ell^2}{4}\alpha_{n-1}^2+\frac{\ell^2}{4}(1-\alpha_{n-1}^2)\alpha_{n-2}\alpha_n+\frac{\ell n}{4z}+\frac{\ell^2}{16z^2}\Big)
 \, \mathbf{\operatorname I}_2,\\
\widetilde M_n^\prime & =
\begin{bmatrix}
 \frac{\ell}{2}z+\frac{n}{2} & \frac{\ell}{2}b_n^{-1}\alpha_n \\
 \frac{\ell}{2}b_{n-1}\alpha_{n-2} & -\frac{\ell}{2}z-\frac{n}{2} \\
\end{bmatrix}.
\end{align*}
Furthermore, taking into account the definition of \( C_n \) in \eqref{Zn},
we have
\begin{align*}
z^2C_n^{\prime\prime}C_n^{-1}+2zC_n^{\prime}C_n^{-1}
& 
= \big( \frac{\ell^2}{16}z^2-\frac{\ell n}{4} z-\frac{\ell^2}{8}+\frac{n^2}{4}+\frac{\ell n}{4z}+\frac{\ell^2}{16z^2} \big) \mathbf{\operatorname I}_2 +
 \begin{bmatrix}
\frac{\ell}{2} z-\frac{n}{2} & 0\\
0 & - \frac{\ell}{2} z + \frac{n}{2}
\end{bmatrix} , \\
z^2C_n^{\prime}C_n^{-1}+z\mathbf{\operatorname I}_2
 &=
\begin{bmatrix}
\frac{\ell}{4}z^2+\big(1-\frac{n}{2}\big)z-\frac{\ell}{4} & 0\\
0 & -\frac{\ell}{4}z^2+\big(1+\frac{n}{2}\big)z+\frac{\ell}{4}
\end{bmatrix} .
\end{align*}
By substituting all these matrices into \eqref{op2bessel}, we obtain the desired equations.

\subsection{Jacobi modified}
We consider here a generalization of the semiclassical weight function, studied in~\cite{Ra10}, namely
\begin{align*}
w(\theta )=\tau(b)\dde ^{-\eta \theta} \big( \sin^2(\theta /2) \big)^{\lambda} H (\dde^{ \ddi \theta}), && \theta \in [0,2\pi],
\end{align*}
where \( H \) is an entire function, and
\begin{align*}
\eta \in \mathbb{R}, &&
\lambda > -1/2 , &&
b=\lambda + \ddi \eta,
&& \text{and} &&
 \tau(b)= \frac{\dde ^{\pi\eta} 2^{b+\overline{b}}|\Gamma(b+1)|^2}{2\pi\Gamma(b+\overline{b}+1)}.
\end{align*}
Since \( z=\dde ^{\ddi \theta} \), we can write the weight function as
\begin{align*}
\nu(z)=\frac{\tau(b)}{2^{b+\overline{b}}}(-z)^{-\overline{b}}(1-z)^{b+\overline{b}} H (z), && z\in\mathbb{T}.
\end{align*}
For simplicity, in this work we consider
\( \nu(z)=(-z)^{-\overline{b}}(1-z)^{b+\overline{b}} H (z) \),
\(z\in\mathbb{T}\),
which coincides up to a constant with the above weight.


%

We will study the analytic character for the structure matrix associated with these \emph{generalized modified Jacobi} orthogonal polynomials.

\begin{proposition}
\label{teo:mnjacobi}
Let \(
\nu(z)=(-z)^{-\overline{b}}(1-z)^{b+\overline{b}} H (z)\), then the structure matrix \( M_n \),
 \( n\in\mathbb{N} \), is~a holomorphic function in \( \mathbb{C}\setminus
 \{0,1\}
 \) with a simple pole at \( z=0 \) and a removable singularity or a simple pole at \( z=1 \).
\end{proposition}

\begin{proof} 
From Proposition~\ref{teo:geral}, it follows that \( M_n \) has isolated singularities at \( z=0 \) and \( z=1 \). 
Multiplying \( M_n \) by \( z \) we have
\begin{align*}
zM_n=zY_n^\prime Y_n^{-1}+zY_nC^\prime_n C^{-1}_n Y_n^{-1}.
\end{align*}
Note that, using~\eqref{G_n_0}, we get
\begin{align*}
 \lim_{z \to 0}zM_n(z)
 & =
 \begin{bmatrix}
-\frac{1}{2}(\overline{b}+n)(2|\alpha_{n-1}|^2-1) & -b_n^{-1}(\overline{b}+n)\overline{\alpha}_{n-1} \\
 -b_{n-1}(\overline{b}+n)\alpha_{n-1} & \frac{1}{2}(\overline{b}+n)(2|\alpha_{n-1}|^2-1)
\end{bmatrix}
 \neq \mathbf{0}_{2} .
\end{align*}
Therefore, by direct calculation, we can state that \( M_n \) has a simple pole at \( z=0 \).

To prove the statement related to the singularity at the point \( z=1 \), it is sufficient to prove that
\( \displaystyle
\lim_{z \to 1} (z-1)^2M(z)=\mathbf{0}_{2} \).
At the point \( z=1 \), using~\cite[Ch. 8.3]{gakhov} and~\cite[Lemma~7.2.2]{AF97}, we can state that
\begin{align*}
G_n(z) = 
\begin{cases}
\operatorname{O}(1) , & b+ \overline{b} = \beta + \ddi \gamma, \quad \beta > 0, \\
\operatorname{O}\big( \log (z-1) \big) , & b+ \overline{b} = \ddi \gamma, \\
\operatorname{O}\big( (z-1)^{\beta} \big) , & b+ \overline{b} = \beta + \ddi \gamma, \quad -1 <\beta < 0 ,
\end{cases}
\end{align*}
and analogously
\begin{align*}
G^*_n(z) =
\begin{cases}
\operatorname{O}(1) , & b+ \overline{b} = \beta + \ddi \gamma, \quad \beta > 0, \\
\operatorname{O}\big( \log (z-1) \big) , & b+ \overline{b} = \ddi \gamma, \\
\operatorname{O}\big( (z-1)^{\beta} \big) , & b+ \overline{b} = \beta + \ddi \gamma, \quad -1 <\beta < 0 .
\end{cases}
\end{align*}
It also follows that
\(\displaystyle
G^{\prime}_n(z) = \frac{1}{2\pi \ddi }\int_\mathbb{T}
\frac{\Phi_n(t)}{(t-z)^2}\frac{\nu(t)}{t^n}
\dd t \).
So, taking into account that
\begin{align*}
(z-1)G^{\prime}_n(z) & = \frac{1}{2\pi \ddi }\int_\mathbb{T}
\frac{(z-1)\Phi_n(t)}{(t-z)^2}\frac{\nu(t)}{t^n} \dd t \\
& = -\frac{1}{2\pi \ddi }\int_\mathbb{T}
\frac{\Phi_n(t)}{t-z}\frac{\nu(t)}{t^n}
\dd t + \frac{1}{2\pi \ddi }\int_\mathbb{T}
\frac{(t-1)\Phi_n(t)}{(t-z)^2}\frac{\nu(t)}{t^n}
\dd t ,
\end{align*}
and using integration by parts, and again the behavior of the Cauchy integral at the power singularity \((1-z)^{b + \overline{b}} \), we get that
 \begin{align*}
(z-1)G^{\prime}_n(z) = 
\begin{cases}
\operatorname{O}(1) , & b+ \overline{b} = \beta + \ddi \gamma, \quad \beta > 0, \\
\operatorname{O}\big( \log (z-1) \big) , & b+ \overline{b} = \ddi \gamma, \\
\operatorname{O}\big( (z-1)^{\beta} \big) , & b+ \overline{b} = \beta + \ddi \gamma, \quad -1 <\beta < 0 .
\end{cases}
\end{align*}
Analogously,
\begin{align*}
(z-1){G^*_n}^{\prime}(z) =
\begin{cases}
\operatorname{O}(1) , & b+ \overline{b} = \beta + \ddi \gamma, \quad \beta > 0 , \\
\operatorname{O} \big( \log (z-1) \big) , & b+ \overline{b} = \ddi \gamma, \\
\operatorname{O}\big( (z-1)^{\beta} \big) , & b+ \overline{b} = \alpha + \ddi \gamma, \quad -1 <\beta < 0 .
\end{cases}
\end{align*}
So, it holds in any case that
\begin{align*}
\lim_{z\to 1} (z-1) G_n(z) & = 0, && \lim_{z\to 1} (z-1) G^*_n(z) = 0, \\
\lim_{z\to 1} (z-1)^2 G^{\prime}_n(z) & = 0, &&
\lim_{z\to 1} (z-1)^2 {G^*}^{\prime}_n(z) = 0 ,
\end{align*}
and, using that \(\det Y_n = 1 \), we arrive to
\begin{align*}
Y_{n} ^{\prime}(z)
=
\begin{bmatrix}
\operatorname{O}(1) & \operatorname{o}(\frac{1}{(z-1)^2}) \\
\operatorname{O}(1) & \operatorname{o}(\frac{1}{(z-1)^2})
\end{bmatrix}, 
&&
Y_{n}^{-1}(z) 
=
\begin{bmatrix}
\operatorname{o}\big( (z-1)^{-1} \big) & \operatorname{o} \big( (z-1)^{-1} \big) \\
\operatorname{O}(1) & \operatorname{O}(1)
\end{bmatrix}, 
&& z \to 1.
\end{align*}
From these we successively get that
{\small
\begin{align*} 
 & \lim _{z \to 1} (z-1)^{2}\left(Y_{n}\right)^{\prime}\left(Y_{n}\right)^{-1} \\
 & \phantom{ola} =
\lim _{z \to 1} (z-1)^{2}
\begin{bmatrix}
 \operatorname{o} \big( (z-1)^{-1} \big)+ \operatorname{o} \big( (z-1)^{-2} \big) 
 &\operatorname{o} \big( (z-1)^{-1} \big)+\operatorname{o} \big( (z-1)^{-2} \big) \\
 \operatorname{o} \big( (z-1)^{-2} \big) +\operatorname{o} \big( (z-1)^{-1} \big) 
 & \operatorname{o} \big( (z-1)^{-2} \big) + \operatorname{o} \big( (z-1)^{-1} \big)
\end{bmatrix}
 \\
& =\lim_{z \to 1} (z-1)^{2}
\begin{bmatrix}
 \operatorname{o} \big( (z-1)^{-2} \big) &
 \operatorname{o} \big( (z-1)^{-2} \big) \\
\operatorname{o} \big( (z-1)^{-2} \big) &
\operatorname{o}\big( (z-1)^{-2}\big)
\end{bmatrix}
= \mathbf{0}_{2} ,
\end{align*}}
{\hspace{-.075cm}}and
{\small
\begin{align*}
& \lim_{z \to 1} (z-1)^2 Y_n C^\prime_n C^{-1}_n Y_n^{-1} \\
& 
\phantom{ola} = 
\lim _{z \to 1} (z-1)^2 
\begin{bsmallmatrix}
\operatorname{O}(1) & \operatorname{o} ( (z-1)^{-1} ) \\
\operatorname{O}(1) & \operatorname{o} ( (z-1)^{-1} )
\end{bsmallmatrix}
\begin{bsmallmatrix}
-\frac{ n+\overline{b} }{2 z} + \frac{ b+\overline{b} }{2 (z-1)} + \operatorname O (1) & 0 \\
0& \frac{n+\overline{b}}{2 z} - \frac{ b+\overline{b} }{2 (z-1)} + \operatorname O (1)
\end{bsmallmatrix}
\begin{bsmallmatrix}
\operatorname{o} ( (z-1)^{-1} ) & \operatorname{o} ( (z-1)^{-1} ) \\
\operatorname{O}(1) & \operatorname{O}(1) 
\end{bsmallmatrix}
 \\
& \phantom{ola} = \lim _{z \to 1} (z-1)^2
\begin{bsmallmatrix}
\operatorname{O} ( (z-1)^{-1} ) 
\operatorname{o} ( (z-1)^{-1} ) &
 \operatorname{O} ( (z-1)^{-1} ) 
 \operatorname{o} ( (z-1)^{-1} ) \\
\operatorname{O} ( (z-1)^{-1} )
 \operatorname{o} ( (z-1)^{-1} ) & 
\operatorname{O} ( (z-1)^{-1} ) 
\operatorname{o} ( (z-1)^{-1} )
\end{bsmallmatrix}
 =
\mathbf{0}_{2} .
\end{align*}}
{\hspace{-.1cm}}By combining previous results, we obtain
\(\displaystyle
\lim_{z \to 1} (z-1)^{2} M_{n}= \mathbf{0}_{2}
 \)
as we wanted to prove.
\end{proof}

In the hypothesis of the Proposition~\ref{teo:mnjacobi}, we have that
\( \widetilde{M}_n=z(1-z)M_n \)
is an entire matrix function.
Now, we will explicitly determine the \( \widetilde M_n \) just defined when \( H (z) = 1 \).
We know that
\begin{align*}
z(1-z)M_n=z(1-z)Y_n^\prime Y_n^{-1}+z(1-z)Y_nC^\prime_n C^{-1}_n Y_n^{-1}.
\end{align*}
From \eqref{ynnew}, \eqref{inversaynnew}, and \eqref{Zn}, then by \eqref{Gninf} and \eqref{Gnestrelainf}, we obtain the following asymptotic behavior about infinity, 
{\small
\begin{align*}
& \widetilde{M}_n(z)
={\begin{bmatrix}
\displaystyle
-\frac{(b+n)z+(\overline{b}-n)}{2}+\Phi_1^n
& -(b+n+1)b_n^{-1}\overline{\alpha}_n \\
-(b+n-1)b_{n-1}\alpha_{n-2} & 
\displaystyle
\frac{(b+n)z+(\overline{b}-n)}{2} - \Phi_1^n
\end{bmatrix}} +\operatorname{O}(z^{-1}) 
 .
\end{align*}
}
{\hspace{-.075cm}}Using the Liouville Theorem, it follows that
\begin{align}\label{matriz1}
& \widetilde{M}_n(z)
=
\begin{bmatrix}
-\frac{1}{2}( (b+n)z+(\overline{b}-n) )+\Phi_1^n & -(b+n+1)b_n^{-1}\overline{\alpha}_n \\
-(b+n-1)b_{n-1}\alpha_{n-2} & \frac{1}{2} ( (b+n)z+(\overline{b}-n) ) -\Phi_1^n
\end{bmatrix} 
 .
\end{align}
Note that \( \widetilde{M}_n(z)=\widetilde{F}_n^{1}z+\widetilde{F}_n^{0} \), where \( \widetilde{F}_n^{1} \), and \( \widetilde{F}_n^{0} \) are constants and \( \displaystyle \lim_{z \to 0}z(1-z)M_n=\widetilde{F}_n^{0} \).
 Therefore, applying the Lemma \ref{lemma6}, we get
\begin{align*}
& \widetilde{F}_n^{0}={\begin{bmatrix}
-\frac{1}{2}(\overline{b}+n) ( 2|\alpha_{n-1}|^2-1 ) & -b_n^{-1}(\overline{b}+n)\overline{\alpha}_{n-1} \\
 -b_{n-1}(\overline{b}+n)\alpha_{n-1} & \frac{1}{2}(\overline{b}+n) ( 2|\alpha_{n-1}|^2-1) 
\end{bmatrix}} .
\end{align*}
Moreover, from \( \widetilde{M}_n \) in \eqref{matriz1}, we have
\begin{align*}
& \widetilde{F}_n^{1}={\begin{bmatrix}
-\frac{1}{2}(b+n) & 0 \\
 0 & \frac{1}{2}(b+n)
\end{bmatrix}},
\end{align*}
and so we arrive to
{\small
\begin{align}\label{matrizhat}
\widetilde{M}_n(z)
 =
\begin{bmatrix}
\displaystyle
-\frac{(b+n)z+(\overline{b}+n)\big(2|\alpha_{n-1}|^2-1\big)}{2} & -b_{n}^{-1}(\overline{b}+n)\overline{\alpha}_{n-1} \\
-b_{n-1} (\overline{b}+n)\alpha_{n-1} &
\displaystyle
\frac{(b+n)z+(\overline{b}+n)\big(2|\alpha_{n-1}|^2-1\big)}{2}
\end{bmatrix}.
\end{align}
}
{\hspace{-.075cm}}It is easy to see that the residue matrix of \( M_n \) at the pole \( z=1 \) is given by
{\small
\begin{align*}
 \lim_{z \to 1}(z-1)M_n(z)
 & =
\begin{bmatrix}
\displaystyle
\frac{(b+n)+(\overline{b}+n)\big(2|\alpha_{n-1}|^2-1\big)}{2} & b_{n}^{-1}(\overline{b}+n)\overline{\alpha}_{n-1} \\
b_{n-1} (\overline{b}+n)\alpha_{n-1} &
\displaystyle
-\frac{(b+n)+(\overline{b}+n)\big(2|\alpha_{n-1}|^2-1\big)}{2}
\end{bmatrix}.
\end{align*}}
{\hspace{-.075cm}}Moreover,
comparing the \((1,1) \) and \((1,2) \) entries in the representation of \(\widetilde M_n \) given by~\eqref{matrizhat} and~\eqref{matriz1}, we obtain
\begin{align}
\label{eq:representacao}
\Phi_1^n = \overline b - (\overline b + n) |\alpha_{n-1}|^2 && \text{and} &&
\alpha_n = \frac{b+n}{\overline b + n +1} \alpha_{n-1}, && n \in \mathbb N.
\end{align}
The expression~\eqref{eq:representacao} for the \(\alpha_n \) was obtained in~\cite{Ra10}.

Applying Proposition~\ref{propnula}, we get a zero curvature formula.
In fact, multiplying~\eqref{curvatureformula} by \( z ( 1-z ) \), we arrive to
\begin{align*}
z(1-z)T_n^\prime +T_n\widetilde{M}_n-\frac{1-z}{2}T_n-\widetilde{M}_{n+1}T_n=\mathbf{0}_{2}, && n \in \mathbb N 
 ,
\end{align*}
and continue as in Proposition~\ref{propeqfund2} we get
\begin{align*}
\widetilde{M}_{n+1}\bigg\{z(1-z)T_n^\prime -\frac{(1-z)}{2}T_n\bigg\}+\bigg\{z(1-z)T_n^\prime -\frac{(1-z)}{2}T_n\bigg\}\widetilde{M}_{n}=\widetilde{M}_{n+1}^2T_n-T_n\widetilde{M}_n^2.
\end{align*}



We are interested in the differential equations fulfilled by the orthogonal polynomials on the unit circle. Here, we use the Riemann--Hilbert problem approach in order to derive these differential relations.

Multiplying equation~\eqref{Mn} by \( z (1-z) \) we get \( z (1-z) Z_n^\prime (z) = \widetilde M_n Z_n (z) \), and from this, we get
\begin{align*}
 z(1-z)Y_n^{\prime}=\widetilde{M}_nY_n-z(1-z)Y_nC_n^{\prime}C_n^{-1} ,
\end{align*}
which entrywise reads as:
{\small
\begin{align}
 & 
z(1-z)\Phi_n^{\prime}(z)
 =
 \big( -nz+(\overline{b}+n)(1-|\alpha_{n-1}|^2) \big)
 \Phi_n(z)+(\overline{b}+n)(1-|\alpha_{n-1}|^2)\overline{\alpha}_{n-1}\Phi_{n-1}^*(z)
 \label{eqdifjacobi}, 
 \\
\nonumber & z(1-z)G_n^{\prime}(z)
 =
 \big( 
 -bz-(\overline{b}+n)|\alpha_{n-1}|^2
 \big)
 G_n(z)+(\overline{b}+n)(1-|\alpha_{n-1}|^2)\overline{\alpha}_{n-1}G_{n-1}^*(z),
 \\
 \nonumber & z(1-z) \big( \Phi_{n-1}^* \big)^{\prime}(z)
 =
\big(
bz+(\overline{b}+n)|\alpha_{n-1}|^2
\big)
\Phi_{n-1}^*(z)+(\overline{b}+n)\alpha_{n-1}\Phi_{n}(z), 
 \\
\nonumber & z(1-z) \big( G_{n-1}^* \big)^{\prime}(z)
 =
\big(
nz-(\overline{b}+n)(1-|\alpha_{n-1}|^2)
\big)
G_{n-1}^*(z)+(\overline{b}+n)\alpha_{n-1}G_{n}(z).
\end{align}
}


Note that with some manipulations, it can be shown that equation \eqref{eqdifjacobi} is equivalent to the structure relation
\begin{align*}
(z-1)\Phi_{n}^{\prime}(z)=-(\overline{b}+n)(1-|\alpha_{n-1}|^2)\Phi_{n-1}(z)+n\Phi_{n}(z)
\end{align*}
presented in \cite{BRS23}.

Applying Proposition~\ref{propeqdif2}, we can derive second order differential equation for the matrix~\(Y_n \). In fact, we only have to multiply~\eqref{op2orden} by \( z(1-z) \), to obtain
\begin{align*}
z(1-z)Y_n^{\prime\prime}+2Y_n^{\prime} ( z(1-z) C_n^{\prime}C_n^{-1} )+Y_n ( z(1-z) C_n^{\prime\prime} C_n^{-1}) =\Big( z(1-z) {M}_n^{\prime} + \frac{\widetilde{M}_n^2}{z(1-z)} \Big) \, Y_n.
\end{align*}
We can see that
\( \displaystyle z(1-z) {M}_n^{\prime} = \widetilde M_n^\prime - \frac{1}{z} \widetilde M_n + \frac{1}{1-z} \widetilde M_n \)
and
\begin{align*}
 \widetilde M_n = z(1-z) Z_n^\prime Z_n^{-1} =
 z(1-z) \big( Y_n^\prime Y_n^{-1} + Y_n C_n^\prime C_n^{-1} Y_n^{-1} \big).
\end{align*}
Substituting this into the last equation we arrive to
\begin{multline}
\label{opsegundoother}
 z(1-z)Y_n^{\prime\prime}+Y_n^\prime[2z(1-z) \, C_n^\prime C_n^{-1}+(1-2z) \, \mathbf{\operatorname I}_2 ]
 \\
+Y_n \, [z(1-z)C_n^{\prime\prime}C_n^{-1} +(1-2z)C_n^\prime C_n^{-1}] 
 =\Big(
 \widetilde{M}_n^{\prime}+\frac{\widetilde{M}_n^2}{z(1-z)}
 \Big)
 Y_n.
\end{multline}
Now, we will split~\eqref{opsegundoother} in four second order differential equations for \( \Phi_n\), \( \Phi_n^* \), \( G_n\), \( G_n^* \).
From the representation of \(\widetilde M_n \) given in~\eqref{matrizhat}, we get
\begin{align*}
&\widetilde M_n^2 = \big( z | \alpha_{n-1}|^2 (n+b) (\overline b + n )+\frac{1}{4} ( \overline b - z (n+b)+n )^2 \big) \, \mathbf{\operatorname I}_2, \\
&\widetilde M_n^\prime =
\begin{bmatrix}
 -\frac{n+b}{2} & 0 \\
 0 & \frac{n+b}{2} \\
\end{bmatrix},
\end{align*}
and by definition of \( C_n \) in \eqref{Zn}, we have
\begin{align*}
 & 2z(1-z)C_n^{\prime}C_n^{-1}+(1-2z)\mathbf{\operatorname I}_2=
\begin{bmatrix}
1-n(1-z)-(2+b)z-\overline{b} & 0\\
0 & 1+n+(b-2-n)z+\overline{b}
\end{bmatrix} ,
 \\
 & \begin{multlined}
z(1-z)C_n^{\prime\prime}C_n^{-1}+(1-2z)C_n^{\prime}C_n^{-1}
 = \begin{bmatrix}
 -\frac{1}{4} (n-b-2) (n-b) & 0 \\
 0 & -\frac{1}{4} (n-b) (n-b+2) \\
\end{bmatrix} \\
+ \frac{(\overline{b}+n)^2}{4z} \mathbf{\operatorname I}_2
+ \frac{\operatorname{Re} (b)^2}{1-z} \mathbf{\operatorname I}_2 .
\end{multlined}
\end{align*}
Moreover,
it can be seen
that 
\begin{align*}
\frac{\widetilde M_n^2}{z(1-z)} 
 =
\Big( -\frac{1}{4} (n+b)^2 +\frac{(\overline{b}+n)^2}{4z} +\frac{ - 
\operatorname{Im}(b)^2 + (b+n)(\overline{b}+n) | \alpha_{n-1}| ^2}{1-z} \Big) \, \mathbf{\operatorname I}_2
\end{align*} 
Now, from the second identity in~\eqref{eq:representacao} it can be proven that
\begin{align*}
 (n+ b ) (\overline b +n ) |\alpha_{n-1}|^2 = | b| ^2 ,
\end{align*}
and so, substituting all these matrices into \eqref{opsegundoother}, we obtain
\begin{align*}
&
z(1-z)\Phi_n^{\prime\prime}(z)+[(n-b-2)z+(1-n-\overline{b})]\Phi_n^{\prime}(z)+n(1+b)\Phi_n(z)=0,
 \\ 
&
z(1-z)G_n^{\prime\prime}(z)+[(b-n-2)z+(1+n+\overline{b})]G_n^{\prime}(z)+b(1+n)G_n(z)=0,\\
& 
 z(1-z) \big( \Phi_{n-1}^* \big)^{\prime\prime}(z)+\big((n-b-2)z+(1-n-\overline{b})\big) \big( \Phi_{n-1}^* \big)^{\prime}(z)+b(n-1)\Phi_{n-1}^*(z)=0,
 \\ 
&
 z(1-z) \big(G_{n-1}^*\big)^{\prime\prime}(z)+\big((b-n-2)z+(1+n+\overline{b})\big) \big( G_{n-1}^* \big)^{\prime}(z)+n(b-1)G_{n-1}^*(z)=0,
\end{align*}
which are hypergeometric differential equations for \( \Phi_n \), \( \Phi_n^* \), \( G_n \), and \( G_n^* \).

\section*{Acknowledgments}

\begin{enumerate}[\rm\( \bullet \)]
 \item
 AB
acknowledges Centre for Mathematics of the University of Coimbra 
(funded by the Portuguese Government through FCT/MCTES, 
UID/00324/2025.
\item
 AF
acknowledges the CIDMA Center for Research and Development in Mathematics and Applications
(University of Aveiro) and the Portuguese Foundation 
for Science and Technology (FCT) for their support within
project, 
UID/04106/2025.

\item
KR acknowledges the CAPES and PROPG/UNESP for support by doctoral sandwich students grants. The work was done while Karina Rampazzi was visiting Ana Foulqui\'e at the Department of Mathematics of University of Aveiro (UA). Their stay at UA for a period of six months during 2023. This author is extremely grateful to UA for receiving all the necessary support to undertake this research.

\end{enumerate}

\section*{Declarations}

\begin{enumerate}[\rm 1)]
 \item \textbf{Conflict of interest:} 
 The authors declare no conflict of interest.
 \item \textbf{Ethical approval:} 
 Not applicable.
 \item \textbf{Contributions:} 
 All the authors have contribute equally.
 \item \textbf{Data availability:} 
 This paper has no associated data.
\end{enumerate}

\end{document}